\theoremstyle{plain}
\newtheorem{theorem}{Theorem}
\newtheorem*{theorem*}{Theorem}
\newtheorem{lemma}[theorem]{Lemma}
\newtheorem{proposition}[theorem]{Proposition}
\theoremstyle{remark}
\newtheorem{remark}{Remark}
\theoremstyle{definition}
\newtheorem*{definition}{Definition}
\title[On perturbations of the Schwarzschild anti-de Sitter spaces]{On perturbations of the \mbox{Schwarzschild anti-de Sitter} spaces of positive mass}
\author{Lucas C. Ambrozio}
\address{Instituto de Matem\'atica Pura e Aplicada (IMPA) \\ Estrada Dona Castorina 110 \\ 22460-320 Rio de Janeiro \\ Brazil; lambroz@impa.br}
\thanks{The author was supported by FAPERJ and CNPq-Brasil}
\begin{document}
 
\begin{abstract}
In this paper we prove the Penrose inequality for metrics that are small perturbations of the Schwarzschild anti-de Sitter metrics of positive mass. We use the existence of a global foliation by weakly stable constant mean curvature spheres and the monotonicity of the Hawking mass. \\
\end{abstract}

\maketitle

\section{Introduction}

\indent Let $(M^3,g)$ be a complete Riemannian three-manifold, possibly with boundary, with exactly one end. Assume that the complement of a compact subset of $M$ is diffeomorphic to $\mathbb{R}^3$ minus a ball. Roughly speaking, $(M,g)$ is called \textit{asymptotically hyperbolic} when the metric $g$ decays sufficiently fast to the hyperbolic metric when expressed in the spherical coordinates induced by this chart. If an asymptotically hyperbolic manifold $(M,g)$ has scalar curvature $R\geq -6$ and its boundary $\partial M$ is empty or has mean curvature $H\leq 2$, there is a well-defined geometric invariant called the \textit{total mass} of $(M,g)$, a non-negative number $m$ that is zero if and only if $(M,g)$ is isometric to the hyperbolic space. This is the content of the Positive Mass Theorem in the asymptotically hyperbolic setting, proved with spinorial methods by X. Wang \cite{W} when $\partial M$ is empty and by P. Chru\'{s}ciel and M. Herzlich \cite{CH} under weaker asymptotic conditions. \\
\indent If an asymptotically hyperbolic manifold $(M,g)$ has scalar curvature $R\geq -6$ and its boundary $\partial M$ is a \textit{connected minimal surface} that is \textit{outermost}, i.e., if there are no other closed minimal surfaces in $M$, then it is conjectured that the total mass $m$ of $(M,g)$ is related to the area of $\partial M$ by the following inequality:
\begin{equation}\label{enunciadopenrose}
  \left(\frac{|\partial M|}{16\pi}\right)^{\frac{1}{2}} + 4\left(\frac{|\partial M|}{16\pi}\right)^{\frac{3}{2}} \leq m.
\end{equation}
\indent This statement is known as the \textit{Penrose Conjecture} in the asymptotically hyperbolic setting. We refer the reader to the surveys \cite{BC} and \cite{M}, where he will find a comprehensive discussion on these types of inequalities in various settings and also on the physics behind these conjectures. In the original \textit{asymptotically flat} setting, the Penrose Conjecture was proved with different techniques by G. Huisken and T. Ilmanen \cite{HI} and by H. Bray \cite{B2}.\\

\indent The Penrose Conjecture contains also a rigidity statement. There are important models, known as the \textit{Schwarzschild anti-de Sitter spaces of positive mass}, that satisfy the equality in (\ref{enunciadopenrose}). They are obtained as spherically symmetric metrics $g_m$ on $M=[0,+\infty)\times S^2$ with constant scalar curvature $-6$, where the parameter $m$ is a positive real number
that coincides with the total mass of $(M,g_m)$ above discussed (see Section \ref{Secao2} for more details). The Penrose Conjecture also asserts that the Schwarzschild anti-de Sitter spaces of positive mass are the unique asymptotically hyperbolic manifolds, with scalar curvature $R\geq-6$ and an outermost minimal boundary, that satisfy the equality in (\ref{enunciadopenrose}). \\

\indent A special feature of the Schwarzschild anti-de Sitter spaces of positive mass is that they are foliated by constant mean curvature spheres which are \textit{weakly stable}. This kind of foliation of an asymptotically hyperbolic manifold is interesting, among other reasons, because of a monotonicity result observed by H. Bray in \cite{B1}: if $R\geq-6$, the so-called Hawking mass functional, defined for closed surfaces $\Sigma$ in $(M,g)$ by
\begin{equation}\label{mh}
m_{H}(\Sigma):=\sqrt{\frac{|\Sigma|}{16\pi}}\left( 1-\frac{1}{16\pi}\int_{\Sigma}( H^2 - 4 )d\Sigma \right),
\end{equation}
\noindent is monotone non-decreasing along these foliations in the direction of increasing area of the leaves. \\
\indent If $\partial M$ is a minimal surface in $(M,g)$, its Hawking mass is exactly the left-hand side of (\ref{enunciadopenrose}). This suggests an approach to prove the Penrose inequality (\ref{enunciadopenrose}), at least for the class of asymptotically hyperbolic manifolds $(M,g)$, with $R\geq -6$ and an outermost minimal boundary, admitting this kind of foliation starting at $\partial M$ 
and sweeping out all $M$: Inequality (\ref{enunciadopenrose}) would follow if the Hawking mass of the leaves near the infinity converges to the total mass of $(M,g)$. \\

\indent There are asymptotically hyperbolic manifolds admitting unique foliations by weakly stable CMC spheres \textit{near the infinity}. Results in this direction were proved by R. Rigger \cite{R} and
 then by A. Neves and G. Tian in a quite general setting (see \cite{NT1} and \cite{NT2}). See also \cite{MP} for other results of this nature in asymptotically hyperbolic settings. For the original asymptotically flat setting, see the pioneering work of G. Huisken and S.T. Yau \cite{HY}. \\
\indent In the present paper, we consider metrics on $M=[0,+\infty)\times S^2$ that are small \textit{global} perturbations of the Schwarzschild anti-de Sitter metric $g_m$ in a suitable sense and show that the foliation constructed in \cite{NT2} outside a compact set can be extended up to $\partial M$. Following the argument outlined above, we then show the Penrose inequality is true for these asymptotically hyperbolic manifolds. \\
\indent In order to state our results more precisely, let us introduce some terminology. Given $m>0$, the perturbations of $g_m$ we consider belong to the space $\mathcal{M}(M,m)$ of metrics $g$ of class $C^3$ on $M:=\{p=(s,x)\in[0,+\infty)\times S^2\}$ such that $\partial M$ is a minimal surface in $(M,g)$ and
\begin{equation*}
 d(g,g_m) := \sup_{p\in M}\left(\sum_{i=0}^{3}\exp(4s)\|(\nabla^{m})^i(g-g_m)\|_{g_{m}}(p)\right) < +\infty,
\end{equation*}
\noindent see Section \ref{Secao2} for more details. All these metrics are asymptotically hyperbolic with total mass $m$ in the sense of \cite{W}. \\
\indent Our main results can be summarized as follows (see Theorem \ref{colagem} and Theorem \ref{penrose}):
\begin{theorem*}
 Let $m>0$. There exists $\epsilon>0$ such that for every metric $g\in\mathcal{M}(M,m)$ with $d(g,g_m)<\epsilon$ the following statements hold:
  \begin{itemize}
    \item[$i)$] There exists a foliation $\{\Sigma_t\}_{t\in[0,+\infty)}$ of $(M,g)$ by weakly stable constant mean curvature spheres such that $\Sigma_{0}=\partial M$ is an outermost minimal surface.
    \item[$ii)$] (The Penrose inequality). If the scalar curvature of $g$ is greater than or equal to $-6$, then
    \begin{equation*}
      \left(\frac{|\partial M|}{16\pi}\right)^{\frac{1}{2}} + 4\left(\frac{|\partial M|}{16\pi}\right)^{\frac{3}{2}} \leq \lim_{t\rightarrow+\infty}m_{H}(\Sigma_{t}) = m, 
    \end{equation*}
    \noindent with equality if and only if $(M,g)$ is isometric to the Schwarzschild anti-de Sitter space
of mass $m$.
  \end{itemize}
\end{theorem*}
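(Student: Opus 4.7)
The plan is to establish parts (i) and (ii) in sequence: first build the global CMC foliation by combining the near-infinity foliation from \cite{NT2} with a new foliation near the boundary, then apply Bray's monotonicity of the Hawking mass \cite{B1} along this foliation.

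For (i), I rely on the fact that in the model $(M,g_m)$ the coordinate spheres $\{s\}\times S^2$ are already a weakly stable CMC foliation that starts, at $s=0$, with the minimal boundary. The plan is to run an implicit function theorem argument: write a nearby CMC sphere in $(M,g)$ as the normal graph of a function $u\in C^{2,\alpha}(S^2)$ of zero mean over $\{s\}\times S^2$ and solve the equation that its mean curvature in $(M,g)$ equals the model value $h_m(s)$. The linearization at $g=g_m$, $u=0$ is the Jacobi operator of the model coordinate sphere, which is invertible on mean-zero functions since, for $m>0$, weak stability is in fact strict positivity on that subspace. The smallness of $d(g,g_m)$ lets the implicit function theorem apply uniformly in $s\in[0,T]$; at $s=0$ the boundary $\partial M$ is already minimal in $(M,g)$ by hypothesis, so the $s=0$ leaf is $\partial M$ itself. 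Choosing $T$ larger than the constant from \cite{NT2} and invoking uniqueness in the overlap identifies the two families into one smooth foliation $\{\Sigma_t\}_{t\in[0,+\infty)}$. Weak stability of the new leaves is preserved under the small perturbation, and the fact that $\partial M$ is outermost follows from the foliation structure: another closed minimal surface in $M$ would make tangential contact with some leaf $\Sigma_{t_0}$ of positive mean curvature from the correct side, contradicting the strong maximum principle.

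For (ii), the monotonicity formula of \cite{B1} states that if $R\geq -6$ and $\{\Sigma_t\}$ is a foliation by weakly stable CMC spheres parametrized by increasing area, then $t\mapsto m_H(\Sigma_t)$ is non-decreasing. At $t=0$ the leaf $\partial M$ is minimal, so
\[
m_H(\Sigma_0)=\left(\frac{|\partial M|}{16\pi}\right)^{1/2}+4\left(\frac{|\partial M|}{16\pi}\right)^{3/2},
\]
the left-hand side of the Penrose inequality. As $t\to+\infty$ the leaves exit every compact set, and the exponential decay encoded in $d(g,g_m)<\epsilon$ allows a direct comparison of the integrals defining $m_H(\Sigma_t)$ with the corresponding integrals over the model coordinate sphere $\{s(t)\}\times S^2$, giving $m_H(\Sigma_t)\to m$ since this is a known computation for $(M,g_m)$. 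Chaining the three facts gives the Penrose inequality. For rigidity, equality forces $t\mapsto m_H(\Sigma_t)$ to be constant, so each non-negative term in the derivative formula of \cite{B1} must vanish identically: every leaf is totally umbilic and $R\equiv -6$ along the foliation. A smooth foliation of $M$ by totally umbilic CMC spheres realizes $g$ as a warped product $dr^2+\phi(r)^2 g_{S^2}$; together with $R\equiv -6$, the minimality of $\partial M$, and the matching of the asymptotic mass to $m$, this singles out the Schwarzschild anti-de Sitter metric $g_m$.

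The main obstacle I anticipate is the boundary portion of part (i): producing the CMC foliation uniformly for $s\in[0,T]$, anchoring it cleanly to $\partial M$, and matching it with the Neves--Tian leaves in the overlap. This requires checking that the Jacobi operator of the model coordinate sphere is uniformly invertible on mean-zero $C^{2,\alpha}$ functions as $s$ varies on $[0,T]$, that the $C^3$ decay of $g-g_m$ suffices for a quantitative implicit function theorem with uniform constants, and that weak stability transfers from $g_m$ to $g$ for every leaf. Once the foliation is in hand, the Hawking mass argument for part (ii) is largely routine, with the rigidity case requiring only a short analysis of the resulting warped-product ODE.
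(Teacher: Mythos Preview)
Your overall strategy---compact foliation by implicit function theorem, Neves--Tian foliation near infinity, matching, then Bray monotonicity plus a limit computation---is exactly the paper's. But your implicit function theorem setup for the compact region has a genuine gap.

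You propose to take $u$ of zero mean on $S^2$ and solve $H(s,u,g)=h_m(s)$. This cannot work as stated. The linearisation $v\mapsto -L_s v$ at $(g_m,0)$ sends zero-mean functions to zero-mean functions, so as a map $E\to C^{0,\alpha}(S^2)$ it has image in the codimension-one subspace $F$ of zero-mean functions and is never surjective; the implicit function theorem does not apply. If instead you drop the zero-mean restriction and try to invert the full Jacobi operator, then $L_s=\frac{1}{r^2}(\Delta_0+2-\frac{6m}{r})$ has a kernel (the constants) exactly at $r=3m$, so you cannot cross $s=s(3m)$ this way either. The paper resolves this by \emph{not} prescribing the mean curvature on the compact part: it solves $\Phi(s,g,u)=H(s,u,g)-\fint H(s,u,g)=0$ for zero-mean $u$, which yields a CMC sphere $\Sigma_s(g)$ whose mean curvature $H_g(s)$ is whatever it turns out to be, generally different from $H_m(s)$. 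This is why the matching with the Neves--Tian leaves (which are parametrised by their mean curvature $\ell$) is not a one-line appeal to uniqueness: one must show that some $\Sigma_s(g)$ and some $\Sigma_\ell(g)$ actually coincide, which the paper handles via a second implicit function theorem (prescribed mean curvature, but only on an interval inside $(s(3m),\infty)$ where the full $L_s$ \emph{is} invertible) and a short monotonicity argument for $H_g$.

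Two smaller points. For ``outermost'' you invoke the maximum principle against leaves of positive mean curvature, but you have not argued that $H_t>0$ for $t>0$; in the paper this comes from $H_m'(0)>0$ and continuity in $g$. In the rigidity step, vanishing of the derivative of $m_H$ also forces each leaf to have constant Gaussian curvature (not just umbilicity and $R\equiv -6$); this is what lets you write $g$ as $ds^2+\phi(s)^2 g_0$ with the \emph{round} $g_0$ and then identify $\phi$ from $R=-6$ and $m_H\equiv m$.
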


\indent The paper is organized as follows. In Section \ref{Secao2}, we describe the geometric properties
of the models and define the class of perturbations of $(M,g_m)$ we are going to work with. In Section \ref{Secao3}, we construct foliations of compact regions of $M$ by weakly stable CMC spheres which begin at the minimal boundary. In Section \ref{Secao4}, we recall A. Neves and G. Tian's result in \cite{NT2} about existence and uniqueness of foliations near the infinity of certain asymptotically hyperbolic manifolds by weakly stable CMC spheres, stating a version adapted to the space of metrics we consider. In Section \ref{Secao5}, we calculate the limit of the Hawking mass of the leaves of that foliation. In Section \ref{Secao6}, we argue that the foliations constructed before match on their overlap if the perturbation is small enough and show that the obtained foliation has the properties described in the above theorem. In Section \ref{Secao7}, we prove the Penrose inequality for these small perturbations following the monotonicity argument outlined above.  \\

\indent We remark that in the asymptotically hyperbolic setting there is also another form of the Penrose Conjecture where the boundary corresponds to some $H=2$ surface (see \cite{BC} and \cite{M}). In the end of the paper we briefly discuss it and explain the modifications of the previous results that establish 
it for small perturbations (see Theorem \ref{colagem2} and Theorem \ref{penrose2}). \\

\indent The approach to the Penrose Conjecture involving the monotonicity of the Hawking mass for a family of surfaces that interpolates the outermost boundary and the infinity was originally suggested 
in the asymptotically flat setting by R. Geroch \cite{G}, who proposed the inverse mean curvature flow to produce such family. This program was successfully implemented by G. Huisken and T. Ilmanen \cite{HI}. In the asymptotically hyperbolic setting, however, there are serious difficulties in using this approach, see the work of A. Neves \cite{N}. \\
\indent In the asymptotically hyperbolic setting, the Penrose Conjecture in its full generality is still an open problem. Using Bray's monotonicity as in \cite{B1}, J. Corvino, A. Gerek, M. Greenberg and B. Krummel \cite{CGGK} proved the conjecture for asymptotically hyperbolic manifolds that are isometric to a Schwarzschild anti-de Sitter space of positive mass outside a compact set under restrictive hypotheses on the behavior of its isoperimetric surfaces, among them the assumption that the coordinate spheres near infinity are isoperimetric surfaces. This hypothesis has been proved to be always verified for compact perturbations of the models by O. Chodosh \cite{C}, whose recent work has also showed that isoperimetric surfaces exists for all sufficiently large volumes in asymptotically hyperbolic manifolds with scalar curvature $R\geq -6$. Finally, F. Gir\~ao and L. L. de Lima proved both cases of the conjecture and the
higher dimensional analogues of them for another class of asymptotically hyperbolic manifolds, those that are certain graphical hypersurfaces of the hyperbolic space (see \cite{GL1} and \cite{GL2}). For related work on Penrose type inequalities in asymptotically hyperbolic settings, we refer the reader to \cite{BO}, \cite{DGS} and \cite{LN}. \\
\indent As pointed out by one of the referees, our result may be also conceived as a ``local'' Penrose inequality and may be related to the dynamical stability of the Schwarzschild anti-de Sitter spaces of positive mass as solutions to the Einstein vacuum equations with negative cosmological constant (for further information on this subject, see \cite{FMR} and \cite{HW}). \\

\indent \textit{Acknowledgements.} I am grateful to my Ph.D advisor at IMPA, Fernando Cod\'a Marques, for his constant encouragement and advice. I would like to thank the referees for their many useful comments. I was supported by FAPERJ and CNPq-Brasil.
 
%%%%%%%%%%%%%%%%.

\section{The Schwarzschild anti-de Sitter spaces and its perturbations} \label{Secao2}

\indent Let $m$ be a real number. Let $\rho_m : (r_0,+\infty) \rightarrow \mathbb{R}$ be the function given by $\rho_m(r)=\sqrt{1+r^2-2m/r}$, where $r_0=r_{0}(m)$ is the unique positive zero of $\rho_m$ (if $m>0$) or $0$ (if $m\leq0$). Let $(S^2,g_0)$ be the standard round sphere of constant Gaussian curvature $1$ and let $dS^2$ denote its volume element. \\
\indent We call \textit{Schwarzschild anti-de Sitter space of mass $m$} the metric completion of the Riemannian mani\-fold $((r_0,+\infty)\times S^2,g_m)$, where using the natural $r$ coordinate the metric $g_m$ is written as 
$$g_m= \frac{dr^2}{\rho_{m}^2(r)} + r^2g_0 = \frac{dr^2}{1+r^2-\frac{2m}{r}} + r^2 g_0.$$
\indent Although the expression of $g_m$ in this coordinate system becomes singular at $r_0$, it can be proved that when $m>0$ the metric $g_m$ extends to a (smooth) Riemannian metric on $M=[r_0,\infty)\times S^2$. On the other hand a smooth extension is not possible if $m<0$. Notice also that if we let the parameter $m$ to be zero we recover the hyperbolic space (this can be easily seen by performing the coordinate change $r=\sinh s$). \\
\indent We will call \textit{coordinate spheres} the surfaces $S_{r}=\{r\}\times S^2\subset M$.
The following proposition describes the geometry of $(M,g_m)$ and of its coordinate spheres.
\begin{proposition}\label{geomAdSS}
  (Geometry of the Schwarzschild anti-de Sitter space of mass $m$)
  \begin{itemize} 
   \item[$i)$] The Ricci curvature of $g_m$ is given by
    $$Ric_m=(-2-\frac{2m}{r^3})\frac{1}{\rho_m^2(r)}dr^2+(-2+\frac{m}{r^3})r^2 g_0.$$
   \item[$ii)$] The scalar curvature of $g_m$ is constant and equal to $-6$. 
   \item[$iii)$] The coordinate spheres $S_r$ are totally umbilic surfaces with constant mean curvature given by
     $$ H_{m}(r) = \frac{2}{r}\sqrt{1+r^2-\frac{2m}{r}}.$$
   \item[$iv)$] The Hawking mass of all coordinate spheres is $m$.  
   \item[$v)$] The Jacobi operator of the coordinate sphere $S_r$ is given by
     $$ L_r = \frac{1}{r^2} \left(\Delta_0 + \left(2-\frac{6m}{r} \right)\right), $$
   where $\Delta_0$ is the Laplacian operator of the round sphere $(S^2,g_0)$.
  \end{itemize}
\end{proposition}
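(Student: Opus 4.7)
The plan is to exploit the warped product structure of $g_m$. First I would change to the arc-length radial coordinate $s$ defined by $ds/dr = 1/\rho_m(r)$, so that $g_m = ds^2 + \phi(s)^2 g_0$ with $\phi(s) := r(s)$ and $\phi'(s) = \rho_m(r)$. Implicit differentiation of $\rho_m^2 = 1 + r^2 - 2m/r$ gives $\phi'' = \rho_m \rho_m' = r + m/r^2$. In this form $(M,g_m)$ is a standard rotationally symmetric warped product, and every claim reduces either to a textbook warped product formula or to a one-line calculation in $r$.

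For $(i)$ and $(ii)$, I would substitute $\phi$ into the standard warped product expressions, which yield $\mathrm{Ric}_m = -2(\phi''/\phi)\, ds^2 + (1 - (\phi')^2 - \phi\phi'')\, g_0$. Using $\phi'' = r + m/r^2$ and $(\phi')^2 = 1 + r^2 - 2m/r$, the coefficient of $ds^2$ becomes $-2 - 2m/r^3$ and the coefficient of $g_0$ becomes $-2r^2 + m/r$; rewriting $ds^2 = dr^2/\rho_m^2$ gives exactly $(i)$. Taking the trace and simplifying immediately produces $R_m \equiv -6$, which is $(ii)$.

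Since coordinate spheres $S_r$ are orbits of the isometric $SO(3)$-action on $(M,g_m)$, they are totally umbilic; from $g_m = ds^2 + \phi^2 g_0$ the second fundamental form with respect to $\nu = \partial_s = \rho_m(r)\partial_r$ is $A = (\phi'/\phi)\, g_{S_r}$, so $H = 2\phi'/\phi = 2\rho_m(r)/r$, which is $(iii)$. Part $(iv)$ is then just substitution into \eqref{mh}: $|S_r| = 4\pi r^2$, hence $\int_{S_r}H^2 = 16\pi(1+r^2 - 2m/r)$ and $\int_{S_r}(H^2 - 4) = 16\pi - 32\pi m/r$, so $m_H(S_r) = (r/2)(2m/r) = m$.

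For $(v)$ I would invoke the standard Jacobi operator formula $L = \Delta_{S_r} + |A|^2 + \mathrm{Ric}_m(\nu,\nu)$. The induced metric on $S_r$ is $r^2 g_0$, so $\Delta_{S_r} = r^{-2}\Delta_0$; umbilicity gives $|A|^2 = H^2/2 = 2(1+r^2-2m/r)/r^2$; and from $(i)$, $\mathrm{Ric}_m(\nu,\nu) = \rho_m^2 \cdot \mathrm{Ric}_m(\partial_r,\partial_r) = -2 - 2m/r^3$. Adding these three terms, the constants and the $r$-independent parts cancel, leaving $L_r = r^{-2}(\Delta_0 + 2 - 6m/r)$. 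No step is truly an obstacle; the only mildly delicate point is keeping sign conventions consistent when reading off the warped product Ricci formula, after which the entire proposition is mechanical.
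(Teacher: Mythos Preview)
Your proposal is correct and is precisely what the paper has in mind: the paper's proof reads in its entirety ``A calculation in coordinates,'' and you have carried out that calculation explicitly via the warped product structure $g_m = ds^2 + \phi(s)^2 g_0$. All of your intermediate computations (the expressions for $\phi'$, $\phi''$, the Ricci components, the umbilicity and mean curvature, the Hawking mass, and the Jacobi operator) check out.
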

\begin{proof}
  A calculation in coordinates. 
\end{proof}
\indent Notice that when $m>0$ the Jacobi operator of $S_r$ is invertible except for $r=3m$. In any case, it is invertible when restricted to the space of zero mean value functions. However, a degeneration occurs when $r$ goes to infinity: up to normalization, it becomes $\Delta_0+2$, which is no longer invertible in this restricted space. \\

\indent From now on we assume $m>0$. Let $s$ be the function that gives the distance of a point
of $(M,g_m)$ to $\partial M$. Using $s\in[0,+\infty)$ as coordinate, one can write
\begin{equation}\label{gmcoords}
  g_m = \frac{dr^2}{1+r^2-\frac{2m}{r}}+r^2g_0 = ds^2 + \sinh^{2}(s)v_m(s)g_0,
\end{equation}
\noindent where $v_m$ is a positive function defined on $[0,+\infty)$ that has the following expansion as $s$ goes to infinity:
$$v_m(s)=1+ \frac{2m}{3\sinh^3 s} + O(\exp(-5s)).$$
\indent Although we have explicit formulas as in Proposition \ref{geomAdSS} only when we use the $r$ coordinate, it will be more convenient to use the $s$ coordinate. We will then consider $g_m$ to be defined on $M=[0,+\infty)\times S^2$ by formula (\ref{gmcoords}) above, and as a small abuse of notation we use $s$ both for the first coordinate of a point $p\in M$ and for the function $r\in (r_{0},+\infty) \mapsto s(r)\in (0,+\infty)$ that gives the coordinate change described above. It is worth noting that, as a function of $s$, the $r$ coordinate expands as $r=\sinh(s)(1+O(\exp(-3s)))$ as $s$ goes to infinity. In particular, for example, the mean curvature of the coordinate spheres $S_s:=S_{r(s)}$ behaves as 
\begin{equation*}
  H_{m}(s)=2\frac{\cosh s}{\sinh s} - \frac{2m}{\sinh^{3} s} + O(\exp(-5s)) \quad \text{as $s$ goes to infinity}.
\end{equation*}

\indent To conclude our description of the Schwarzschild anti-de Sitter space, it may be useful to remark that the double of $M$, $\hat{M}=(-\infty,+\infty)\times S^2$, can be endowed with a (smooth) Riemannian metric whose restriction to $M=[0,+\infty)\times S^2$ is the metric $g_m$ and such that the involution $i: (s,x) \in \hat{M} \mapsto (-s,x) \in\hat{M}$ is an isometry fixing $\{0\}\times S^2$, a surface identified with the totally geodesic boundary of $(M,g_m)$. We still denote this metric by $g_m$.

\medskip

\indent Now we define the class of metrics on $M=[0,+\infty)\times S^2$ we are going to work with.

\begin{definition}\label{Defperturbations}
Given $m>0$, let $\mathcal{M}(M,m)$ be the set of metrics $g$ of class $C^3$ on
$M=[0,+\infty)\times S^{2}$ such that
\begin{itemize}
  \item[$a)$] $\partial M=\{0\}\times S^2$ is a minimal surface in $(M,g)$; and
  \item[$b)$] There exists a constant $C>0$ such that for every $p=(s,x)\in M$,
  $$\left(\|g-g_m\| + \|\nabla^m g\| +\|(\nabla^m)^2 g\|+\|(\nabla^m)^3 g\|\right)(p) \leq C \exp(-4s).$$
  Here the norm is calculated with respect to the metric $g_m$ and $\nabla^m$ denotes the Levi-Civita connection of $g_m$.
\end{itemize}
\end{definition}

\indent The space $\mathcal{M}(M,m)$ has a distance function
\begin{equation*}
 d(g_1,g_2):= \sup_{p\in M}\left(\sum_{i=0}^{3}\exp(4s)\|(\nabla^{m})^i(g_1-g_2)\|_{g_m}(p)\right).
\end{equation*}
\indent We remark that each metric in $\mathcal{M}(M,m)$ is asymptotically hyperbolic with total mass $m$, according to the definitions of \cite{W} and \cite{CH}. The control of $g$ up to the third derivative is needed in order to apply the results of \cite{NT2}. \\
\indent Observe that we do not assume a priori that $\partial M$ is outermost.\\
\indent Given $g\in\mathcal{M}(M,m)$ one can calculate the expansions of its Ricci tensor, its scalar curvature and the mean curvature of the coordinate spheres in $(M,g)$ as follows: one adds terms of order $O(\exp(-4s))$ to the expansion of the corresponding quantities of $(M,g_m)$ in the $s$ coordinate. \\ 

\indent We finish this section by discussing the geometry of surfaces in $(M,g)$, $g\in\mathcal{M}(M,m)$. \\
\indent In this paper, we consider closed surfaces $\Sigma\subset M=[0,+\infty)\times S^{2}$ such that $M\setminus\Sigma$ has two connected components, one of them containing $\partial M$.
We define the inner radius and the outer radius of such $\Sigma$ to be
\begin{equation*}
  \underline{s} :=\min\{s(x);\,x\in\Sigma\} \quad \text{and} \quad \overline{s} :=\max\{s(x);\,x\in\Sigma\},
\end{equation*}
\noindent respectively. We also use the convention that the unit normal vector $N$ points toward the unbounded component of $M\setminus\Sigma$ and that the mean curvature is the trace of the second fundamental form $A$ given by $A(X,Y):=g(\nabla_{X} N, Y)$ for every pair of vectors $X,Y$ tangent to $\Sigma$. \\
\indent A constant mean curvature surface $\Sigma$ in $(M,g)$ is called weakly stable when its Jacobi operator,
\begin{equation*}
  L_{\Sigma} := \Delta_{\Sigma} + Ric(N,N) + |A|^{2},
\end{equation*}
\noindent is such that $-\int_{\Sigma} L_{\Sigma}(\phi)\phi d\Sigma \geq 0$ for all functions $\phi\in C^{\infty}(\Sigma)$ with $\int_{\Sigma}\phi d\Sigma = 0$. This analytic definition is equivalent to the geometric one that the second variation of the area of $\Sigma$ under volume preserving variations is non-negative. \\
\indent Throughout this paper, we will frequently consider surfaces in $(M,g)$ that are graphical over a coordinate sphere $S_{s}$. Given $u\in C^2(S^2)$, we write
 \begin{equation*}
  S_{s}(u):=\{(s+u(x),x)\in M;\, x\in S^2\}.
 \end{equation*}
\indent We denote by $H(s,u,g)$ the mean curvature of $S_{s}(u)$ in $(M,g)$, which we consider to be a real function defined on $S^2$. 

\begin{remark} \label{curvmedia}
 In the Section $3$, it will be important to understand how $H(s,u,g)$ depends on the variables $s,u$ and $g$. On way to analyze this is the following. Choose $(x^0=s,x^1,x^2)$ local coordinates on $M$, where $x=(x^1,x^2)$ are local coordinates on $S^2$. The local parametrization
$x \mapsto (x^0+u(x^1,x^2),x^1,x^2)$ of $S_{s}(u)$ gives explicit tangent vectors to $S_{s}(u)\subset (M,g)$, whose coordinates depends only on $\partial_{i}u(x)$ for $i=1,2$. From this, we conclude that the coordinates of the normal field $N$ to $S_{s}(u)$ in $(M,g)$ are smooth functions of $\partial_{i}u(x)$ and $g_{AB}(x^0+u(x^1,x^2),x^1,x^2)$ for all $i,j=1,2$, $A,B=0,1,2$. By the definition of the mean curvature, one then verify that $H(s,g,u)(x)$ is a smooth function of $\partial_{i}\partial_{j} u(x)$, $\partial_{i}u(x)$, $g_{AB}(x^0+u(x),x^1,x^2)$ and $\partial_{A}g_{BC}(x^0+u(x),x^1,x^2)$ for all $i,j=1,2$, $A,B,C=0,1,2$. From this rough analysis, for example, we see that taking derivatives of $H(s,u,g)$ with respect to the $s$ and $u$ variables requires the functions $g_{AB}$ to be twice differentiable.
\end{remark}

%%%%%%%%%%%%%

\section{Foliation of compact regions}\label{Secao3}

\indent For metrics $g\in\mathcal{M}(M,m)$ that are close enough to $g_m$, we use the implicit function theorem to construct a family of weakly stable CMC spheres on compact regions of $(M,g)$. \\
\indent For the rest of the paper, we fix some $\alpha\in (0,1)$.

\begin{theorem}\label{nocompacto}
  Let $m>0$. Given $S>0$, there exists $\eta_0>0$ such that for all $\eta\in (0,\eta_0)$ there exists $\epsilon>0$ with the following property: \\
  \indent For every $g\in\mathcal{M}(M,m)$ with $d(g,g_m)<\epsilon$ and for every $s\in[0,S]$ there exists a unique function $u(s,g)\in C^{2,\alpha}(S^2)$ with $\|u(s,g)\|_{C^{2,\alpha}} < \eta$ and $\int_{S^2} u(s,g) dS^2=0$ such that the surface 
  \begin{equation*}
   \Sigma_{s}(g):=S_s(u(s,g)) =\{ (s+u(s,g)(x), x)\in M;\, x\in S^2 \}
  \end{equation*}
  \noindent has constant mean curvature with respect to the metric $g$. \\
  \indent Moreover, for every $g\in\mathcal{M}(M,m)$ with $d(g,g_m)<\epsilon$, $\Sigma_{0}(g)=\partial M$ and the family $\{\Sigma_s(g)\}_{s\in[0,S]}$ gives a foliation of a compact region of $(M,g)$ by weakly stable CMC spheres, with positive mean curvature if $s\in(0,S]$. \\
  \indent Finally, when $S>s(3m)$, given any constant $\kappa>0$ and any compact interval $I\subset (s(3m),S]$, it is possible to choose the $\epsilon$ above in such way that for every $g\in\mathcal{M}(M,m)$ with $d(g,g_m)<\epsilon$ the  mean curvature $H_{g}(s)$ of $\Sigma_s(g)$ in $(M,g)$ is monotone decreasing on $I$ and satisfies $|H_{g}(s)-H_{m}(s)| < \kappa$ for every $s\in I$.
\end{theorem}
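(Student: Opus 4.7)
My plan is to apply a parametric implicit function theorem (IFT) in H\"older spaces to the mean curvature operator, with $s\in[0,S]$ and $g\in\mathcal{M}(M,m)$ as parameters. Write $C^{k,\alpha}_{0}(S^{2})$ for the subspace of $C^{k,\alpha}(S^{2})$ of functions of zero spherical mean, and consider
\[
\Phi(s,u,g) \;:=\; H(s,u,g) - \frac{1}{4\pi}\int_{S^{2}} H(s,u,g)\,dS^{2},
\]
as a map $[0,S]\times C^{2,\alpha}_{0}(S^{2})\times\mathcal{M}(M,m)\to C^{0,\alpha}_{0}(S^{2})$. A zero of $\Phi$ gives a constant mean curvature graph $S_{s}(u)$, and fixing $u$ to have zero mean eliminates the reparametrization ambiguity in $s$. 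Proposition \ref{geomAdSS}(iii) provides $\Phi(s,0,g_{m})=0$ for every $s$, and Remark \ref{curvmedia} gives the differentiability of $\Phi$ in $u$ required by the IFT.

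The partial derivative $\partial_{u}\Phi(s,0,g_{m})$ on $C^{2,\alpha}_{0}(S^{2})$ coincides with the Jacobi operator of $S_{s}$ in $(M,g_{m})$, which by Proposition \ref{geomAdSS}(v) equals $L_{r(s)}=r(s)^{-2}(\Delta_{0}+2-6m/r(s))$. On the space of spherical harmonics of degree $n\geq 1$ its eigenvalues are $r(s)^{-2}(-n(n+1)+2-6m/r(s))$, all strictly negative: the $n=1$ value is $-6m/r(s)^{3}$ and for $n\geq 2$ the value is at most $-(4+6m/r(s))/r(s)^{2}$. Since $r(s)\in[r_{0},r(S)]$ is compact, these operators are uniformly invertible between the relevant H\"older spaces, and Schauder estimates supply a uniform inverse bound. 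The parametric IFT then produces $\eta_{0},\epsilon>0$ and a continuous family $(s,g)\mapsto u(s,g)\in C^{2,\alpha}_{0}(S^{2})$, with $\|u(s,g)\|_{C^{2,\alpha}}<\eta$, uniquely solving $\Phi(s,u(s,g),g)=0$ for every $s\in[0,S]$ and every $g$ with $d(g,g_{m})<\epsilon$.

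The geometric assertions then follow by smallness arguments. Minimality of $\partial M$ in $(M,g)$ makes $u\equiv 0$ a valid solution at $s=0$, hence by uniqueness $u(0,g)=0$ and $\Sigma_{0}(g)=\partial M$. The map $(s,x)\mapsto (s+u(s,g)(x),x)$ is a diffeomorphism of $[0,S]\times S^{2}$ onto its image provided $1+\partial_{s}u(s,g)(x)>0$, which persists for small $\epsilon$ because $\partial_{s}u(s,g_{m})\equiv 0$. Weak stability of $\Sigma_{s}(g)$ follows because its Jacobi operator is a small perturbation of $L_{r(s)}$, whose uniform negative-definite quadratic form on $C^{2,\alpha}_{0}$ over $[0,S]$ is preserved. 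A direct computation gives $\partial_{s}H_{m}(s) = 2(3m-r(s))/r(s)^{3}$, which is positive at $s=0$ (since $r_{0}^{3}+r_{0}=2m$ forces $3m>r_{0}$) and strictly negative on any compact subinterval of $(s(3m),+\infty)$. Continuous dependence of $H_{g}$ and $\partial_{s}H_{g}$ on $g$ in the declared norms then simultaneously yields $|H_{g}-H_{m}|<\kappa$ on $I$, monotonicity of $H_{g}$ on $I$, and positivity of $H_{g}$ on $(0,S]$ (split into $s$ bounded away from $0$, where $H_{m}\geq c>0$ and closeness suffice, and $s$ near $0$, where $H_{g}(0)=0$ combined with continuity of $\partial_{s}H_{g}$ gives $H_{g}(s)>0$).

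The hard part is obtaining the uniform IFT over the whole compact interval $[0,S]$, and in particular handling the boundary point $s=0$. The uniform invertibility of $L_{r(s)}$ on $[0,S]$ resolves the first concern, while the boundary point can be handled either one-sidedly (since $\Phi$ and its derivatives are defined up to $s=0$ by Remark \ref{curvmedia}, as $g$ is $C^{3}$ on $M$) or equivalently by invoking the smooth extension of $g_{m}$ across $\partial M$ through the doubling $\hat{M}$ recalled at the end of Section \ref{Secao2}, which turns $s=0$ into an interior point of the parameter set and legitimizes the $s$-derivatives used above.
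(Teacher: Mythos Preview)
Your proof is correct and follows essentially the same approach as the paper: define the zero-mean projection $\Phi$ of the mean curvature operator, compute its linearisation at $(s,0,g_m)$ as (minus) the Jacobi operator $L_{r(s)}$, use that $\Delta_0+2-6m/r$ is invertible on zero-mean functions to apply the implicit function theorem uniformly over the compact interval $[0,S]$, and then derive all remaining properties (foliation, weak stability, positivity of $H_g$, monotonicity on $I$) by continuity from the model. The paper handles the boundary point $s=0$ by extending each $g$ to $[-\theta,+\infty)\times S^2$ with controlled $C^3$ norm, exactly the doubling option you mention at the end; your alternative one-sided reading is also legitimate, though the extension makes the $s$-derivatives cleaner.
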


\begin{proof}
 \indent For a minor technical reason, choose $\theta > 0$. In order to consider the geometry of graphs of small functions over $\partial M$, we have to extend each metric in $\mathcal{M}(M,m)$ to a fixed neighborhood of $M$ in $\hat{M}=(-\infty,+\infty)\times S^2$. It is always possible to extend $g\in\mathcal{M}(M,m)$ as a $C^3$ metric $\tilde{g}$ on $\tilde{M}:=[-\theta,+\infty)\times S^2$ in such way that $||\tilde{g}-g_m||_{C^{3}(\tilde{M})}\leq\lambda||g-g_m||_{C^{3}(M)}$ for some fixed $\lambda > 1$. In particular, if $d(g,g_m)<\epsilon$ then $||\tilde{g}-g_m||_{C^{3}(\tilde{M})}<\lambda\epsilon$. For simplicity we refer to this extension of $g$ using the same letter. \\
   \indent On the round unit sphere $(S^2,g_0)$, consider the Banach spaces
  \begin{equation*}
   E=\{u\in C^{2,\alpha}(S^2); \int_{S^2} u dS^2=0\} \,\, \text{and} \,\, F=\{u\in C^{0,\alpha}(S^2); \int_{S^2} u dS^2=0\},
  \end{equation*}
  \noindent endowed with their standard norms. \\
  \indent Given $\eta_0\in(0,\theta)$, for every $s\in[0,S]$ and every $u\in E$ with $||u||_{C^{2,\alpha}}<\eta_0$ we can consider the surfaces
  \begin{equation*}
   S_s(u)=\{(s+u(x),x)\in \tilde{M};\,x\in S^2\}.
  \end{equation*}
  \indent Let $H(s,u,g)$ be the mean curvature of $S_{s}(u)$ with respect to a metric $g\in \mathcal{M}(M,m)$. It defines a $C^1$ map from $[0,S]\times (B(0,\eta_0)\subset E) \times \mathcal{M}(M,m)$ to $C^{0,\alpha}(M)$, see Remark \ref{curvmedia} in the end of Section \ref{Secao2}. \\
  \indent Let $\Phi: [0,S]\times \mathcal{M}(M,m)\times(B(0,\eta_0)\subset E) \rightarrow F$ be the $C^1$ map given by
  \begin{equation*}
    \Phi(s,g,u)=  H(s,u,g)-\frac{1}{4\pi}\int_{S^2}{H(s,u,g)dS^2}.
  \end{equation*}
  \indent Observe that $\Phi(s,g,u)=0$ if and only if $S_{s}(u)$ is a CMC surface in $(\tilde{M},g)$. In particular, $\Phi(s,g_{m},0)=0$ for all $s\in[0,S]$. \\
  \indent We claim that, for every $s\in [0,S]$, $D\Phi_{(s,g_m,0)}$ is an isomorphism when restricted to $E$. In fact, for every $v\in E$, the family $t \mapsto S_s(tv)$ is a normal variation of the coordinate sphere $S_{s}$ in $(M,g_m)$ with speed $v$. Therefore, if $L_s$ is the Jacobi operator of $S_s$ with respect to $g_m$, we have
  \begin{equation*}
    \frac{d}{dt}_{|_{t=0}}\Phi(s,g_m,tv) = -L_{s}(v)+\frac{1}{4\pi}\int_{S^2} L_{s}(v)dS^2 = -L_{s}(v).
  \end{equation*}
  \indent The last equality follows because $L_s(v)=(1/r^2)(\Delta_0+(2-6m/r))(v)$ (see Proposition \ref{geomAdSS}, considering the coordinate change $r=r(s)$) and $v$ has zero mean value. Since $m>0$, $\Delta_0+(2-6m/r)$ is an invertible operator from $E$ to $F$ for all $s\in[0,S]$ and the claim follows. \\ 
  \indent Therefore we can apply the implicit function theorem: there exists (a possibly smaller) $\eta_0>0$ such that for all $\eta\in(0,\eta_0)$, there exists a small ball $B$ around $(s,g_m)$ in $[0,S]\times\mathcal{M}(M,m)$ and a $C^1$ function $(\tilde{s},g)\in B \mapsto u(\tilde{s},g)\in B(0,\eta)\subset E$ such that $u(s,g_m)=0$ and $u(\tilde{s},g)$ is uniquely defined in $B(0,\eta)$ by the equation $\Phi(\tilde{s},g,u(\tilde{s},g))=0$ for all $(\tilde{s},g)\in B$. \\
  \indent Since $u$ is a $C^1$ function of $s$ and $g$, by compactness we can choose $\eta_0$ in such way that for all $\eta\in(0,\eta_0)$ there exists $\epsilon>0$ such that $u$ is uniquely defined on $[0,S]\times\{g\in\mathcal{M}(M,m);d(g,g_m)<\epsilon\}$ and takes values on $B(0,\eta)\subset E$. Thus for each metric $g\in\mathcal{M}(M,m)$ with $d(g,g_m)<\epsilon$ we have constructed a family 
  \begin{equation*}
   \{\Sigma_{s}(g)\}_{s\in[0,S]}:=\{S_s(u(s,g))\}_{s\in[0,S]}
  \end{equation*}
  \noindent of CMC spheres in $(\tilde{M},g)$, where $u(s,g)\in E$ has norm $\|u(s,g)\|_{C^{2,\alpha}} < \eta$. Notice that the surfaces $\Sigma_{s}(g)$ depend smoothly on $s$ and $g$, and also that $\{\Sigma_{s}(g_m)\}$ is precisely the foliation of $M$ by coordinate spheres. \\
  \indent Since $\partial M=S_{0}(0)$ is minimal for all metrics $g\in\mathcal{M}(M,m)$, the uniqueness  of the function $u$ above constructed implies that $u(0,g)=0$, i.e., $\Sigma_{0}(g)=\partial M$ for every $g\in\mathcal{M}(M,m)$ with $d(g,g_m)<\epsilon$.\\
  \indent In order to prove that $\{\Sigma_{s}(g)\}$ is a foliation of some region of $M$, we have to analyze the sign of its lapse function, that is, its normal speed. Since for $g_m$ the constructed family $\{\Sigma_{s}(g_m)\}$ is a foliation, its lapse function is positive on $[0,S]$, hence the lapse function of $\{\Sigma_{s}(g)\}_{s\in[0,S]}$ with respect to all $g\in\mathcal{M}(M,m)$ with $d(g,g_m)<\epsilon$ is also positive, at least when we choose a possibly smaller $\epsilon$. Since each of these families starts at $\partial M=\{0\}\times S^2\subset\tilde{M}$, the families $\{\Sigma_{s}(g)\}_{s\in[0,S]}$ foliate a compact region of $M$.  \\
  \indent To see that $\Sigma_{g}(s)$ has positive mean curvature in $(M,g)$ for all $s\in(0,S]$, observe that this is true for $\Sigma_{s}(g_m)$ in $(M,g_m)$ and also that, by Proposition \ref{geomAdSS}, $H_{m}'(0)=-(1/r_{0}^2)(2-6m/r_0)>0$. Moreover, $H_{g}(s):=H(s,u(s,g),g)$ is a $C^1$ function of $s$ and $g$. Therefore by continuity we can arrange $\epsilon$ in such way that for every $g\in\mathcal{M}(M,m)$ with $d(g,g_m)<\epsilon$ the surface $\Sigma_{s}(g)$ has positive mean curvature in $(M,g)$ for all $s\in(0,S]$. \\
  \indent Now we argue that the leaves are weakly stable. In fact, since $m>0$, for every $s\in[0,S]$ the Jacobi operator $L_s$ of $\Sigma_{s}(g_m)$ in $(M,g_m)$ satisfies
  \begin{equation*}
    -\int_{\Sigma_{s}(g_m)} L_{s}(\phi)\phi d\Sigma_s(g_m) = \int_{S^2} |\nabla_{0}\phi|^2-\left(2-\frac{6m}{r(s)}\right)\phi^2 dS^2 \geq \frac{6m}{r(S)}\int_{S^2} \phi^2 dS^2 
   \end{equation*}
  \noindent for every $\phi\in C^{\infty}(\Sigma_s(g_m))$ with $\int \phi d\Sigma_s(g_m) =0$. Arguing by contradiction we then conclude that for a possibly smaller $\epsilon$ there exists a constant $c>0$ such that for every $g\in\mathcal{M}(M,m)$ with $d(g,g_m)<\epsilon$ and every $s\in[0,S]$ the Jacobi operator $L_{(s,g)}$ of the surface $\Sigma_{s}(g)$ in $(M,g)$ is such that 
  \begin{equation*}
    -\int_{\Sigma_{s}(g)} L_{(s,g)}(\phi)\phi d\Sigma_s(g) \geq c\int_{\Sigma_{s}(g)} \phi^2 d\Sigma_s(g)\geq 0  
  \end{equation*}
  \noindent for every $\phi\in C^{\infty}(\Sigma_s(g))$ with $\int \phi d\Sigma_s(g) =0$, i.e., $\Sigma_s(g)$ is weakly stable. \\
  \indent The last statement of the theorem also follows by continuity, since $H_{m}'(s)$ $<0$ on $(s(3m),+\infty)$.
\end{proof}

\medskip

\indent Notice that in the previous result the mean curvature of the graph $\Sigma_{s}(g)=S_{s}(u(s,g))$
is not prescribed and can be different from $H_{m}(s)$. Having in mind the matching argument of Section \ref{Secao6}, we finish this section showing the existence and uniqueness of a graph over the coordinate sphere $S_{s}$ with prescribed mean curvature $H_{m}(s)$ in $(M,g)$, at least when $d(g,g_m)$ is sufficiently small and $s$ is large enough. More precisely, we have:

\begin{theorem}\label{nocompacto2}
  Let $m>0$. Given a compact interval $I\subset (s(3m),+\infty)$, there exists $\eta_0>0$ such that for all $\eta\in (0,\eta_0)$ there exists $\epsilon>0$ with the following property: \\
  \indent For every $g\in\mathcal{M}(M,m)$ with $d(g,g_m)<\epsilon$ and for every $s\in I$ there exists a unique function $h(s,g)\in C^{2,\alpha}(S^2)$ with $\|h(s,g)\|_{C^{2,\alpha}} < \eta$ such that the surface
  \begin{equation*}
   S_s(h(s,g)) =\{ (s+h(s,g)(x), x)\in M;\, x\in S^2 \}
  \end{equation*}
  \noindent has constant mean curvature $H_m(s)$ in $(M,g)$. \\
  \indent Moreover, the family $\{S_s(h(s,g))\}_{s\in I}$ gives a foliation of a compact region of $(M,g)$ by weakly stable CMC spheres.
  \end{theorem}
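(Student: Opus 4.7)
The proof parallels that of Theorem \ref{nocompacto}, with the modification that we prescribe the mean curvature to be exactly $H_m(s)$ rather than merely demanding that it be constant, and therefore work on the full Hölder space $C^{2,\alpha}(S^2)$ instead of its zero-mean subspace. After extending each $g\in\mathcal{M}(M,m)$ to a fixed neighborhood of $M$ in $\hat{M}$ as in the proof of Theorem \ref{nocompacto}, the plan is to apply the implicit function theorem to the $C^1$ map
\begin{equation*}
\Psi:I\times\mathcal{M}(M,m)\times\left(B(0,\eta_0)\subset C^{2,\alpha}(S^2)\right)\rightarrow C^{0,\alpha}(S^2),\quad \Psi(s,g,h):=H(s,h,g)-H_m(s),
\end{equation*}
which satisfies $\Psi(s,g_m,0)=0$ for every $s\in I$ by item $iii)$ of Proposition \ref{geomAdSS}.

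The key step is to verify that $D_h\Psi_{(s,g_m,0)}=-L_s$ is an isomorphism from $C^{2,\alpha}(S^2)$ onto $C^{0,\alpha}(S^2)$. Using the formula $L_s=\frac{1}{r^2}\left(\Delta_0+(2-6m/r)\right)$ from Proposition \ref{geomAdSS} and the fact that the eigenvalues of $-\Delta_0$ on $(S^2,g_0)$ are $k(k+1)$ for $k\geq 0$, the operator $\Delta_0+c$ is invertible on $C^{2,\alpha}(S^2)$ if and only if $c\notin\{0,2,6,12,\ldots\}$. This is exactly where the hypothesis $I\subset(s(3m),+\infty)$ becomes crucial: it forces $r>3m$ and hence $c=2-6m/r\in(0,2)$ along $I$, so the potential avoids every Laplace eigenvalue. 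Compactness of $I$ then yields a uniform bound on $L_s^{-1}$, and the implicit function theorem together with a compactness argument as in Theorem \ref{nocompacto} provides $\eta_0>0$, $\epsilon>0$ and the asserted unique function $h(s,g)\in C^{2,\alpha}(S^2)$, depending $C^1$-smoothly on $(s,g)$ with $h(\cdot,g_m)\equiv 0$.

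The foliation and weak stability statements then follow by the same arguments used at the end of Theorem \ref{nocompacto}. The variation vector field of $\{S_s(h(s,g))\}_{s\in I}$ is $(1+\partial_s h(s,g))\partial_s$, which at $g=g_m$ reduces to the unit positive normal to $S_s$, so after shrinking $\epsilon$ the $C^1$-closeness of $h(\cdot,g)$ to $h(\cdot,g_m)=0$ keeps the $g$-normal component of this vector field strictly positive, producing a foliation of a compact region. Weak stability then propagates from $g_m$ to $g$ by continuity, using the uniform positivity on $I$ of $L_s$ when restricted to zero-mean functions. The principal obstacle is conceptually modest: it is the potential loss of invertibility of $L_s$ on constants, which is precisely what the hypothesis $I\subset(s(3m),+\infty)$ circumvents; the remaining work is bookkeeping analogous to that already carried out in Theorem \ref{nocompacto}.
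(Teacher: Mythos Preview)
Your proposal is correct and follows essentially the same approach as the paper: define $\Psi(s,g,h)=H(s,h,g)-H_m(s)$, show that its linearization in $h$ at $(s,g_m,0)$ equals $-L_s$, use the hypothesis $I\subset(s(3m),+\infty)$ to ensure $L_s$ is invertible on the full space $C^{2,\alpha}(S^2)$, and then invoke the implicit function theorem together with the compactness and continuity arguments from Theorem~\ref{nocompacto}. The only superfluous step is the extension of $g$ to $\hat{M}$, which is unnecessary here since $I$ is bounded away from $\partial M$; otherwise your argument matches the paper's, with some added detail on the spectral reason for invertibility.
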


\begin{proof}
  Following the notations of Theorem \ref{nocompacto}, given $\eta_0>0$ sufficiently small we consider
  the $C^1$ map $\Psi: I \times \mathcal{M}(M,m)\times(B(0,\eta_0)\subset C^{2,\alpha}(S^2)) \rightarrow C^{0,\alpha}(S^2)$ given by $\Psi(s,g,u)=  H(s,u,g)-H_{m}(s)$. Notice that $\Psi(s,g_m,0)=0$ for all $s\in I$. \\
  \indent The linearisation of $\Psi$ at $(s,g_m,0)$ is such that, for every $v\in C^{2,\alpha}(S^2)$,
  \begin{equation*}
    D\Phi_{(s,g_m,0)}(0,0,v) = -L_{s}(v) = -\frac{1}{r^2}(\Delta_{0} + (2 - \frac{6m}{r})),
  \end{equation*}
  \noindent where we use the coordinate $r=r(s)$, see Proposition \ref{geomAdSS}. Since $I\subset (s(3m),+\infty)$, the map $v\in C^{2,\alpha}(S^2) \mapsto L_{s}(v) \in C^{0,\alpha}(S^2)$ is an isomorphism for all $s\in I$. Hence, we can apply the implicit function theorem and the result follows by the same arguments presented in Theorem \ref{nocompacto}.
\end{proof}

\section{Foliation near the infinity}\label{Secao4}

\indent The next theorem is the version of the existence and uniqueness theorem of A. Neves and G. Tian \cite{NT2} adapted to the asymptotically hyperbolic manifolds $(M,g)$ where $g$ belongs to the space of metrics $\mathcal{M}(M,m)$ (we refer the reader to Theorem 2.2 and the proof of Theorem 8.2 in \cite{NT2}).
\begin{theorem}[\cite{NT2}]\label{noinfinito}
  Let $m>0$. Given $\epsilon_0>0$, there exists $\delta>0$, $C>0$ and $\underline{s}_{0}>s(3m)$
  with the following properties:
  \begin{itemize}
    \item[$1)$] Given $g\in\mathcal{M}(M,m)$ with $d(g,g_m)<\epsilon_0$,
    for all $\ell\in(2,2+\delta)$, there exists a unique sphere $\Sigma_{\ell}=\Sigma_{\ell}(g) \subset M$ 
    such that
    \begin{itemize}
      \item[$a)$] $M\setminus\Sigma_{\ell}$ has two connected components, one of them containing $\partial M$;
      \item[$b)$] $\Sigma_{\ell}$ is a weakly stable constant mean curvature sphere in $(M,g)$
      with mean curvature $H=\ell$; and
      \item[$c)$] The inner radius $\underline{s}_{\ell}$ and the outer radius $\overline{s}_{\ell}$ of $\Sigma_{\ell}$ satisfy
    \begin{eqnarray*}
      \underline{s}_{\ell} \geq \underline{s}_{0} & \quad \text{and} & \overline{s}_{\ell} - \underline{s}_{\ell} \leq 1.
    \end{eqnarray*}
    \end{itemize}
    \item[$2)$] The family $\{\Sigma_{\ell}\}_{\ell\in(2,2+\delta)}$ gives a smooth foliation of 
    the complement of a compact set of $M$ and $\lim_{\ell\rightarrow 2} \underline{s}_{\ell} = +\infty.$ 
    \item[$3)$] Given $g\in\mathcal{M}(M,m)$ with $d(g,g_m)<\epsilon_0$ and $\ell\in(2,2+\delta)$,
    if for the above surface $\Sigma_{\ell}$ in $(M,g)$ we define $\hat{s}_{\ell}$ by the equality 
    $$|\Sigma_{\ell}|=4\pi\sinh^{2}\hat{s}_{\ell},$$
    \noindent then:
    \begin{itemize}
      \item[$a)$] If we set $w_{\ell}(p)=s(p)-\hat{s}_{\ell}$ for $p\in \Sigma_l$, then
      \begin{eqnarray*}
        \sup_{\Sigma_{\ell}}|w_{\ell}| \leq C\exp(-\underline{s}_{\ell}) & \text{and} & \int_{\Sigma_{\ell}}|\partial_{s}^{\top}|^2 d\Sigma_{\ell} \leq C\exp(-2\underline{s}_{\ell}).
      \end{eqnarray*}
      \item[$b)$]
      \begin{equation*}
        \int_{\Sigma_{\ell}} |\mathring{A}_{\ell}|^2 d\Sigma_{\ell} \leq C\exp(-4\underline{s}_{\ell}).
      \end{equation*}
      \item[$c)$] There exists a function $f\in C^{2}(S^{2})$ with 
      $\|f\|_{C^{2}}\leq C$ such that 
      \begin{equation*}
        \Sigma_{\ell}= S_{\hat{s}_{\ell}}(f)=\{(\hat{s}_{\ell}+f(x),x)\in M;\, x\in S^2\}.         
      \end{equation*}
    \end{itemize}
  \end{itemize}
\end{theorem}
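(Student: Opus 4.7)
The plan is to reduce the statement to Neves and Tian's main existence, uniqueness and regularity theorem for weakly stable CMC spheres foliating the end of an asymptotically hyperbolic three-manifold (Theorem 2.2 of \cite{NT2}), together with the sharper decay estimates obtained in the proof of Theorem 8.2 there for perturbations of the Schwarzschild anti-de Sitter end. The only new content is to check that all quantitative constants in that result are uniform over the ball $\{g\in\mathcal{M}(M,m): d(g,g_m)<\epsilon_0\}$.

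First, I would verify that every $g$ in this ball sits inside the class of metrics considered in \cite{NT2}. Combining the expansion $g_m = ds^2 + \sinh^2(s)v_m(s)g_0$ with $v_m(s)=1+\frac{2m}{3\sinh^3 s}+O(\exp(-5s))$ and the defining bound of Definition \ref{Defperturbations}, one sees that $g$ has the asymptotic form required by Neves-Tian with mass aspect function identically equal to $m$ and a $C^3$ error term of order $\exp(-4s)$ whose size is controlled by $\epsilon_0$. This immediately gives the existence and uniqueness in (1) and the foliation property in (2) for each individual $g$ in the ball, for some constants possibly depending on $g$.

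Next, I would trace through the proof in \cite{NT2} to ensure that $\delta$, $\underline{s}_0$ and $C$ depend only on $m$ and $\epsilon_0$. The essential mechanisms there involve: (i) the weak stability inequality combined with Simons' identity, producing the integral decay estimates in (3a) and (3b) for any weakly stable CMC sphere whose mean curvature lies in $(2,2+\delta)$; (ii) an elliptic graphical representation over a coordinate sphere, giving (3c) and the inner-radius bound in (1c); and (iii) an implicit-function-theorem argument at the coordinate spheres of $g_m$ which, by Proposition \ref{geomAdSS}(v), have Jacobi operator $L_r$ whose restriction to the mean-zero subspace has a spectral gap uniform in $r$ large, yielding the unique CMC sphere $\Sigma_\ell$ with $H=\ell$ for each $\ell\in(2,2+\delta)$. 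Because $g-g_m = O(\exp(-4s))$ in $C^3$ with implicit constant bounded by $\epsilon_0$, the perturbation terms generated in (i)-(iii) are absorbed into the linear ones coming from the model $g_m$, with resulting constants depending only on $m$ and $\epsilon_0$.

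The main obstacle is making the uniqueness statement in (1) uniform over the ball: Neves-Tian's uniqueness proof proceeds by showing that any weakly stable CMC sphere of mean curvature close to $2$, satisfying the separation property (1a) and the inner-radius bound (1c), must already lie in a small $C^{2,\alpha}$-neighborhood of a coordinate sphere $S_{\hat{s}_\ell}$ with $\hat{s}_\ell$ large, to which the implicit function theorem at $g_m$ then applies. I would go step by step through each a priori estimate in their argument and check that the $\exp(-4s)$ perturbation of the model contributes only error terms that can be absorbed into a single constant $C(m,\epsilon_0)$. Once uniqueness and the graphical representation (3c) hold uniformly, the foliation property in (2) and the limit $\underline{s}_\ell\to +\infty$ as $\ell\to 2$ follow from a standard lapse-function computation, in the same spirit as the one carried out in the proof of Theorem \ref{nocompacto}.
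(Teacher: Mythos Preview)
Your approach is correct and aligned with the paper's, but you propose more work than is actually needed. The paper does not give a self-contained proof of this theorem; it simply quotes it from \cite{NT2} and, in a remark immediately following the statement, disposes of the uniformity issue in one stroke: Neves and Tian's hypotheses are packaged as a condition $(H)$ involving constants $r_1, C_1, C_2, C_3$, and one checks directly from Definition~\ref{Defperturbations} that for metrics $g\in\mathcal{M}(M,m)$ the constants $r_1, C_2, C_3$ are the \emph{same} for all $g$, while $C_1$ depends only on $d(g,g_m)$. Fixing $C_4=1$, every constant produced by Neves and Tian's theorem then depends only on these input constants, hence only on $m$ and $\epsilon_0$. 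This is a black-box application: no need to reopen the proof of \cite{NT2} or re-derive the a priori estimates, the implicit-function-theorem step, or the uniqueness argument, as you propose in your steps (i)--(iii) and in the final paragraph. Your plan would of course work, but the shortcut is to observe that the uniformity is already built into the way \cite{NT2} states its hypotheses.
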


\begin{remark}
  Using the terminology of \cite{NT2}, the metrics $g\in\mathcal{M}(M,m)$ satisfy condition $(H)$ with same constants $r_1, C_2$ and $C_3$, and the constant $C_1$ depends only on $d(g,g_m)$. Therefore, if we fix the constant $C_4=1$, all constants appearing in Neves and Tian's theorem depend only on the distance between the metric $g\in\mathcal{M}(M,m)$ and $g_m$. This uniform dependence on $d(g,g_m)$ is crucial to the matching argument.
\end{remark}  

\begin{remark} \label{crucial}
  Theorem \ref{noinfinito} is proved by the continuity method (Theorem 8.2 in \cite{NT2}). A fundamental step is to prove that a weakly stable CMC sphere in $(M,g)$ with mean curvature $\ell$ close enough to $2$ and large enough inner radius is graphical over \textit{some} coordinate sphere (this is the content of item $3)$ of the above theorem) and that its Jacobi operator is invertible (see Proposition 8.1 in \cite{NT2}). To prove the \textit{existence} of the foliation, following the argument of \cite{NT2}, page $92$, given $g\in\mathcal{M}(M,g)$ with $d(g,g_m)<\epsilon_0$, consider the family of metrics $g_t=(1-t)g_m+tg \in \mathcal{M}(M,g)$, $t\in [0,1]$. The rough idea is then to prove that for each $s\in(s(3m),+\infty)$ large enough the set $A$ of $t\in[0,1]$ such that there exists a weakly stable CMC sphere $\Sigma\subset (M,g_t)$ with mean curvature $H_m(s)$ and large enough inner radius is open and closed. \\
\indent The set $A$ clearly contains $t=0$. The proof that $A$ is open follows the reasoning of the proof of Theorem $3$: given $t$ in this set, since the Jacobi operator of $\Sigma$ in $(M,g)$ is invertible and since $\Sigma$ is a graph over a coordinate sphere, the implicit function theorem implies the existence of small graphs over the \textit{same} coordinate sphere which have the \textit{same} mean curvature with respect to nearby $g_{t'}$ metrics. The proof that the set $A$ is closed uses the uniform estimates on the norm of the function that gives the graph (see item $3)$ of the above theorem).\\
\indent Since the CMC sphere in $(M,g_m)$ with mean curvature $H_m(s)$ is precisely the coordinate sphere $S_s$, we conclude the following. Given $\epsilon_0>0$, let $\delta$, $\overline{s}_0$ and $C$ be given by Theorem \ref{noinfinito}. Let $I\subset (\overline{s}_0,+\infty)$ be a compact interval such that $H_{m}(s)\in(2,2+\delta)$ for all $s\in I$. Then, the corresponding $\eta_0>0$ given by Theorem \ref{nocompacto2} is such that for every $\eta\in(0,\eta_0)$ there exists $\epsilon\in(0,\epsilon_0)$ such that for every $g\in\mathcal{M}(M,m)$ with $d(g,g_m)<\epsilon$ the surfaces of the foliation $\{\Sigma_{\ell}(g)\}$ described in Theorem \ref{noinfinito} with mean curvature $\ell=H_m(s)\in(2,2+\delta)$ for $s\in I$ are exactly the surfaces obtained in Theorem \ref{nocompacto2}. \\
\indent Observe that the above argument does not make any reference to the \textit{uniqueness} statement of Theorem \ref{noinfinito}. And since it contains all the information needed for the matching argument, we will use only it in the proof of Theorem \ref{colagem} (see Section \ref{Secao6}).
\end{remark}

%%%%%%%%%%%%%%%%%%%

\section{Limit of the Hawking mass}\label{Secao5}

\indent Let $g$ be a metric in $\mathcal{M}(M,m)$ with scalar curvature $R\geq -6$. Recall that the Hawking mass of a closed surface $\Sigma$ in $(M,g)$ is 
\begin{equation*}
  m_{H}(\Sigma)=\sqrt{\frac{|\Sigma|}{16\pi}}\left( 1-\frac{1}{16\pi}\int_{\Sigma}( H^2 - 4 )d\Sigma \right).
\end{equation*}
\indent We want to calculate the limit of the right hand side of the above expression for the weakly stable CMC spheres $\Sigma_{\ell}$ in $(M,g)$ given by Theorem \ref{noinfinito} as they approach the infinity. In order to do this, we need the following consequence of Gauss equation (see \cite{NT1}).

\begin{lemma} \label{apriori}
  Given $g\in\mathcal{M}(M,m)$, let $\{\Sigma_{t}\}_{t>t_0}$ be a family of constant mean curvature spheres in $(M,g)$ such that $\underline{s}_{t}\rightarrow +\infty$ as $t$ goes to infinity. Then
  \begin{equation*}
    (H_{t}^{2}-4)|\Sigma_{t}| = 16\pi - \int_{\Sigma_{t}} \frac{8m-12m|\partial_{s}^{\top}|^2}{\sinh^{3}s} d\Sigma_{t} + 2\int_{\Sigma_{t}}|\mathring{A}_t|^2d\Sigma_{t} + |\Sigma_{t}|O(\exp(-4\underline{s}_{t})).
  \end{equation*}
\end{lemma}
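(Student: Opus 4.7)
The plan is to combine the Gauss equation with the Gauss--Bonnet theorem on the topological sphere $\Sigma_t$, and then expand the ambient geometric quantities using Proposition \ref{geomAdSS} together with the bound $\|(\nabla^m)^i(g-g_m)\|_{g_m} \leq C\exp(-4s)$ for $i=0,1,2$.

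First I would recall that in a three-manifold the Gauss equation implies, for a two-sided surface with normal $N$,
\[
K_{\Sigma} \;=\; \overline{K}(T\Sigma) + \det A \;=\; \overline{K}(T\Sigma) + \frac{H^2}{4} - \frac{|\mathring{A}|^2}{2},
\]
where $\overline{K}(T\Sigma)$ denotes the sectional curvature of $(M,g)$ in the plane $T\Sigma$, and I used $|A|^2 = H^2/2+|\mathring{A}|^2$ in dimension two. Since $\Sigma_t$ is a sphere, integrating and applying Gauss--Bonnet gives $4\pi = \int_{\Sigma_t} \overline{K}(T\Sigma_t)\, d\Sigma_t + H_t^2|\Sigma_t|/4 - \tfrac12\int_{\Sigma_t}|\mathring{A}_t|^2\, d\Sigma_t$, where I used that $H_t$ is constant. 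Rearranging,
\[
(H_t^2-4)|\Sigma_t| \;=\; 16\pi + 2\int_{\Sigma_t}|\mathring{A}_t|^2\, d\Sigma_t \;-\; 4\int_{\Sigma_t}\bigl(\overline{K}(T\Sigma_t)+1\bigr)\, d\Sigma_t.
\]

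Next I would expand the last integrand. In a three-manifold with scalar curvature $R$ one has $\overline{K}(T\Sigma)=R/2-\mathrm{Ric}(N,N)$. For $g_m$, Proposition \ref{geomAdSS}$(ii)$ gives $R_{g_m}/2=-3$, and decomposing the unit normal as $N=\alpha\,\partial_s+V$ with $V$ tangent to the $S^2$-factor and $\alpha^2=1-|\partial_s^{\top}|^2$, a direct calculation in the orthonormal frame $\{\partial_s,(1/(r\sqrt{v_m}))e_A\}$ using the Ricci formula of Proposition \ref{geomAdSS}$(i)$ yields
\[
\mathrm{Ric}_{g_m}(N,N) = -2 -\frac{2m}{r^3} + \frac{3m}{r^3}|\partial_s^{\top}|^2,
\]
and therefore $\overline{K}_{g_m}(T\Sigma)+1 = (2m-3m|\partial_s^{\top}|^2)/r^3$. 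Using the expansion $r=\sinh s\,(1+O(e^{-3s}))$ I would rewrite $1/r^3 = 1/\sinh^{3}s + O(e^{-6s})$, so the corresponding contribution equals $(2m-3m|\partial_s^{\top}|^2)/\sinh^{3}s+O(e^{-4s})$. Multiplying by $-4$ and integrating gives the second term on the right-hand side of the claimed identity.

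Finally I would account for the perturbation from $g_m$ to $g$. Since $g\in\mathcal{M}(M,m)$, the bound on the first two covariant derivatives of $g-g_m$ (which enter the Christoffel symbols and the curvature tensor) gives
\[
|R_g - R_{g_m}|(p) + |\mathrm{Ric}_g - \mathrm{Ric}_{g_m}|_{g_m}(p) = O(e^{-4s(p)}),
\]
uniformly in $p\in M$. This shows $\overline{K}(T\Sigma_t)+1 = (2m-3m|\partial_s^{\top}|^2)/\sinh^{3}s + O(e^{-4\underline{s}_t})$ pointwise on $\Sigma_t$, and integrating over $\Sigma_t$ absorbs the pointwise $O(e^{-4\underline{s}_t})$ into the term $|\Sigma_t|\,O(\exp(-4\underline{s}_t))$. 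Combining this with the Gauss--Bonnet identity above yields the stated formula.

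The main obstacle is the bookkeeping in the expansion: one must correctly identify the decomposition $\alpha^2 = 1-|\partial_s^{\top}|^2$ of the normal, compute $\mathrm{Ric}_{g_m}(N,N)$ from the anisotropic Ricci formula of Proposition \ref{geomAdSS}$(i)$, and check that each curvature perturbation coming from $g-g_m$ really enters at order $O(e^{-4s})$ rather than some weaker rate; given the $C^3$-closeness built into $\mathcal{M}(M,m)$, this is exactly what is needed.
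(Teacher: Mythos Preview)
Your argument is correct and is essentially the paper's proof: Gauss equation plus Gauss--Bonnet, followed by the expansion of $\mathrm{Ric}(N,N)$ from Proposition~\ref{geomAdSS} and the $O(e^{-4s})$ curvature bound coming from $g\in\mathcal{M}(M,m)$. The only place where the paper is more explicit is in justifying that $|V|_{g_m}^2$ (your $1-\alpha^2$) agrees with $|\partial_s^\top|_g^2$ up to $O(e^{-4s})$, which it does by passing through the $g$-unit normal $\nu$ to the coordinate spheres; since $|N|_{g_m}^2=1+O(e^{-4s})$ and $g-g_m=O(e^{-4s})$ this identification is immediate, so your shortcut introduces no gap.
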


\begin{proof}
  \indent Let $K_{t}$ be the Gaussian curvature of $\Sigma_{t}$. The Gauss equation for $\Sigma_{t}$ in $(M,g)$ can be written as 
  \begin{align}\label{eqGauss}
             2K_t & = R-2Ric(N_t,N_t)+H_t^2-|A_t|^2 \\
    \nonumber     & = (R+6)-2(Ric(N_t,N_t) + 2) + \frac{H_{t}^{2}-4}{2} - |\mathring{A}_{t}|^2.
  \end{align}
  \indent By Proposition \ref{geomAdSS}, for metrics $g\in\mathcal{M}(M,m)$, if $\{\partial_s,e_1,e_2\}$ is a $g_m$-orthonormal referential, then
  \begin{align*}
    Ric(\partial_{s},\partial_{s})  & =  -2 -\frac{2m}{\sinh^{3}s}+ O(\exp(-4s)),  \\
    Ric(e_{i},e_{j})                & =  \left(-2+\frac{m}{\sinh^{3}s}\right)\delta_{ij} + O(\exp(-4s)),  \\
    Ric(\partial_{s},e_{i})         & =  O(\exp(-4s)), \quad \text{and} \\
    R+6                             & =  O(\exp(-4s)). 
  \end{align*}
  \indent Considering the $g_m$-orthogonal decomposition $N_{t}=a\partial_{s}+X$, it follows that
  \begin{equation}\label{degauss}
    4K_{t} = (H_{t}^{2}-4) + \left(\frac{8m - 12m |X|^2_{g_m}}{\sinh^{3}s}\right) - 2|\mathring{A}_t|^2 + O(\exp(-4s)).
  \end{equation}
  \indent Observe that if $\nu$ is the unit normal of a coordinate sphere in $(M,g)$, then $\partial_{s}=\nu+W$, where $g(\nu,W)=O(\exp(-4s))$ and $|W|_g=O(\exp(-4s))$. Hence, the $g$-orthogonal decomposition $N_{t}=b\nu+Y$ is such that $|X|^2_{g_m}=|Y|^2_g+O(\exp(-4s))$. On the other hand, if $\nu=bN_{t}+\nu^{\top}$ is the $g$-orthonormal decomposition of $\nu$ corresponding to the tangent space of $\Sigma_t$ and its $g$-normal $N_t$, we have $|Y|^2_{g}=|\nu^{\top}|^2_{g}$. Therefore one can change $|X|_{g_m}^2$ by $|\partial_{s}^{\top}|_{g}^2$ in (\ref{degauss}). \\
  \indent Since $\Sigma_{t}$ is a sphere of constant mean curvature, the lemma follows after integration of (\ref{degauss}).
\end{proof}

\begin{proposition}\label{limitemassa}
Let $m>0$.  Given $g\in\mathcal{M}(M,m)$, the family $\{\Sigma_{\ell}\}_{\ell\in(2,2+\delta)}$ in $(M,g)$ given by Theorem \ref{noinfinito} is such that
  \begin{equation*}
    \lim_{\underline{s}_{\ell}\rightarrow +\infty} \sqrt{\frac{|\Sigma_{\ell}|}{16\pi}}\left( 1-\frac{1}{16\pi}\int_{\Sigma_{\ell}}( H_{\ell}^2-4)d\Sigma_{\ell} \right) = m.
  \end{equation*}
\end{proposition}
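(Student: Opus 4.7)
The plan is to compute the limit directly from Lemma \ref{apriori} and the asymptotic estimates provided by Theorem \ref{noinfinito}. Since $H_\ell$ is constant on $\Sigma_\ell$, we have $\int_{\Sigma_\ell}(H_\ell^2-4)d\Sigma_\ell = (H_\ell^2-4)|\Sigma_\ell|$, so Lemma \ref{apriori} yields
\begin{equation*}
1 - \frac{1}{16\pi}\int_{\Sigma_\ell}(H_\ell^2-4)d\Sigma_\ell = \frac{1}{16\pi}\int_{\Sigma_\ell}\frac{8m - 12m|\partial_s^\top|^2}{\sinh^3 s}d\Sigma_\ell - \frac{1}{8\pi}\int_{\Sigma_\ell}|\mathring{A}_\ell|^2 d\Sigma_\ell + O(e^{-4\underline{s}_\ell}).
\end{equation*}
Using $|\Sigma_\ell| = 4\pi\sinh^2\hat{s}_\ell$, the Hawking mass becomes $\tfrac{\sinh\hat{s}_\ell}{2}$ times the right-hand side above.

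The main step is to extract the leading term. I would use that on $\Sigma_\ell$ one has $s = \hat{s}_\ell + w_\ell$ with $\sup|w_\ell|\leq Ce^{-\underline{s}_\ell}$ by Theorem \ref{noinfinito}(3a), so $\sinh^{-3} s = \sinh^{-3}\hat{s}_\ell\,(1+O(e^{-\underline{s}_\ell}))$ uniformly on $\Sigma_\ell$. Hence
\begin{equation*}
\frac{\sinh\hat{s}_\ell}{2}\cdot\frac{1}{16\pi}\int_{\Sigma_\ell}\frac{8m}{\sinh^3 s}d\Sigma_\ell = \frac{\sinh\hat{s}_\ell}{2}\cdot\frac{8m\cdot 4\pi\sinh^2\hat{s}_\ell}{16\pi\sinh^3\hat{s}_\ell}(1+o(1)) = m(1+o(1)),
\end{equation*}
giving the expected leading contribution $m$.

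It then remains to check that each of the three error contributions vanishes after multiplication by $\sinh\hat{s}_\ell/2 = O(e^{\hat{s}_\ell})$. For the $|\partial_s^\top|^2$ term, combine the pointwise bound on $\sinh^{-3}s$ with $\int_{\Sigma_\ell}|\partial_s^\top|^2 d\Sigma_\ell \leq Ce^{-2\underline{s}_\ell}$ from Theorem \ref{noinfinito}(3a); the resulting contribution is $O(e^{\hat{s}_\ell}e^{-3\underline{s}_\ell}e^{-2\underline{s}_\ell})$. For the traceless part use Theorem \ref{noinfinito}(3b), giving $O(e^{\hat{s}_\ell}e^{-4\underline{s}_\ell})$. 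The remainder from Lemma \ref{apriori} gives $O(e^{\hat{s}_\ell}e^{-4\underline{s}_\ell})$ as well. The key observation tying everything together is that $\overline{s}_\ell - \underline{s}_\ell \leq 1$ and $|w_\ell|\leq Ce^{-\underline{s}_\ell}$ imply $\hat{s}_\ell - \underline{s}_\ell$ is uniformly bounded, so each of these error terms is $O(e^{-\underline{s}_\ell})$ and vanishes as $\underline{s}_\ell\to +\infty$.

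The main obstacle is bookkeeping: one must be careful that the geometric quantities are evaluated in the right metric (the estimates from \cite{NT2} mix $g_m$ and $g$) and that the area factor $\sinh\hat{s}_\ell$ is not overwhelmed by any of the decaying terms, which forces one to use both the pointwise estimate on $w_\ell$ (to replace $\sinh s$ by $\sinh\hat{s}_\ell$ inside the integral) and the integral estimates on $|\partial_s^\top|^2$ and $|\mathring{A}_\ell|^2$. Otherwise the computation is a direct substitution.
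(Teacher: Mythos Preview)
Your approach is essentially the same as the paper's: apply Lemma \ref{apriori}, replace $\sinh^{-3}s$ by $\sinh^{-3}\hat{s}_\ell$ via the pointwise bound on $w_\ell$, use the integral estimates of Theorem \ref{noinfinito}(3) to kill the $|\partial_s^\top|^2$ and $|\mathring{A}_\ell|^2$ contributions, and control the relation between $\hat{s}_\ell$ and $\underline{s}_\ell$ so that the area factor does not swamp the errors. One small slip: in your first display the remainder coming from Lemma \ref{apriori} is $|\Sigma_\ell|\,O(e^{-4\underline{s}_\ell})$, not $O(e^{-4\underline{s}_\ell})$; after multiplying by $\sinh\hat{s}_\ell/2$ this is $O(e^{3\hat{s}_\ell-4\underline{s}_\ell})$ rather than $O(e^{\hat{s}_\ell-4\underline{s}_\ell})$, but your observation that $\hat{s}_\ell-\underline{s}_\ell$ is uniformly bounded still makes it $O(e^{-\underline{s}_\ell})$, so the argument goes through unchanged.
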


\begin{proof}
  \indent Let $\hat{s}_{\ell}$ be defined by $|\Sigma_{\ell}| = 4\pi\sinh^{2}\hat{s}_{\ell}$. We use the information given by Theorem \ref{noinfinito}, item $3)$, and calculate all expansions as $\underline{s}_{\ell}$ goes to infinity. \\
  \indent Lemma \ref{apriori} and the estimates of Theorem \ref{noinfinito}, item $3)$ on the behavior of $|\partial_{s}^{\top}|$ and $|\mathring{A}_{\ell}|$ implies
  \begin{multline}\label{primeira}
    \sqrt{\frac{|\Sigma_{\ell}|}{16\pi}}\left( 1-\frac{1}{16\pi}\int_{\Sigma_{\ell}}( H_{\ell}^2-4)d\Sigma_{\ell} \right) = \\ = \frac{|\Sigma_{\ell}|^{1/2}}{8\pi^{3/2}} \left( \int_{\Sigma_{\ell}} \frac{m}{\sinh^{3}s}d\Sigma_{\ell} + |\Sigma_{\ell}|O(\exp(-4\underline{s}_{\ell}))+O(\exp(-4\underline{s}_{\ell})) \right).
  \end{multline}
  \indent In order to analyze (\ref{primeira}), recall that $\Sigma_{\ell}$ is a graph over the coordinate sphere $S_{\hat{s}_{\ell}}$ given by a function $f$ whose $C^2$ norm is bounded for all $\ell\in (2,2+\delta)$ by the same constant $C>0$. Therefore $|\underline{s}_{\ell}-\hat{s}_{\ell}| = |\min_{x\in S^2}\{f(x)\}| \leq C $ for all $\ell$, which implies
  \begin{equation}\label{segunda}
    |\Sigma_{\ell}|=4\pi\sinh^{2}\hat{s}_{\ell}=O(\exp(2\underline{s}_{\ell})).
  \end{equation}
  \indent On the other hand, for every $p=(s,x)\in \Sigma_{\ell}$, since $|w_{\ell}(p)| \leq C\exp(-\underline{s}_{\ell})$,
  \begin{equation*}
    \frac{\sinh\hat{s}_{\ell}}{\sinh s}=\frac{\sinh(s-w_{\ell}(p))}{\sinh s}=\cosh w_{\ell}(p)-\frac{\cosh{s}}{\sinh{s}}\sinh{w_{\ell}(p)}=1 + O(\exp(-\underline{s}_{\ell})).  
  \end{equation*}
  \indent Therefore
  \begin{equation}\label{terceira}
    \int_{\Sigma_{\ell}} \frac{|\Sigma_{\ell}|^{1/2}}{\sinh^{3}(s)} d\Sigma_{\ell} = \frac{(4\pi)^{3/2}}{|\Sigma_{\ell}|}\int_{\Sigma_{\ell}} \left(\frac{\sinh\hat{s}}{\sinh s}\right)^{3} d\Sigma_{\ell} =8\pi^{3/2}(1+O(\exp(-\underline{s}_{\ell}))).
  \end{equation}
  \indent The proposition follows from (\ref{primeira}), (\ref{segunda}) and (\ref{terceira}).
\end{proof}

\medskip

%%%%%%%%%

\section{The global foliation}\label{Secao6}

\indent We now argue that when a metric $g\in\mathcal{M}(M,m)$ is sufficiently close to $g_m$ the foliations of $(M,g)$ constructed in Sections \ref{Secao3} and \ref{Secao4} match on their overlap. It is possible to argue using the uniqueness of Neves and Tian's foliation, but we follow a different reasoning using the implicit function theorem, see Remark \ref{crucial} in Section \ref{Secao4}.

\begin{theorem}\label{colagem}
  Let $m>0$. There exists $\epsilon>0$ with the following property: \\
  \indent If $g\in\mathcal{M}(M,m)$ is such that $d(g,g_m)<\epsilon$, then there exists a foliation $\{\Sigma_t\}_{t\in[0,+\infty)}$ of $M$ such that:
  \begin{itemize}
   \item[$i)$] Each $\Sigma_{t}$ is a weakly stable CMC sphere in $(M,g)$, with positive mean curvature when $t>0$;
   \item[$ii)$] $\Sigma_{0}=\partial M$ is an outermost minimal surface in $(M,g)$; and
   \item[$iii)$] $\lim_{t\rightarrow +\infty} \sqrt{\frac{|\Sigma_t|}{16\pi}}\left( 1-\frac{1}{16\pi}\int_{\Sigma_t}( H_t^2-4)d\Sigma_t \right)=m$.
  \end{itemize}
\end{theorem}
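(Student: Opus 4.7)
The plan is to glue the inner foliation of Theorem \ref{nocompacto} to the Neves--Tian foliation of Theorem \ref{noinfinito}, using Theorem \ref{nocompacto2} (via Remark \ref{crucial}) as the bridge, and then verify the three properties. First, fix a small $\epsilon_0 > 0$ and let $\delta > 0$ and $\underline{s}_0 > s(3m)$ be the constants furnished by Theorem \ref{noinfinito}. Choose $S > \underline{s}_0$ so that $H_m(S) \in (2, 2 + \delta)$, and pick $s_1 \in (\underline{s}_0, S)$ so that $I := [s_1, S]$ satisfies $H_m(I) \subset (2, 2 + \delta)$. Apply Theorem \ref{nocompacto} with this $S$ and a small parameter $\kappa > 0$ to be determined, and Theorem \ref{nocompacto2} with this $I$, selecting a common threshold $\epsilon \in (0, \epsilon_0)$ and graph-size bound $\eta > 0$ compatible with both. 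For every $g \in \mathcal{M}(M, m)$ with $d(g, g_m) < \epsilon$, we then have simultaneously: the inner foliation $\{\Sigma_s(g) = S_s(u(s, g))\}_{s \in [0, S]}$ with $H_g(s)$ strictly decreasing on $I$ and $\|H_g - H_m\|_{C^0(I)} < \kappa$; the graphs $\{S_s(h(s, g))\}_{s \in I}$ of prescribed mean curvature $H_m(s)$; and the Neves--Tian foliation $\{\Sigma_\ell(g)\}_{\ell \in (2, 2 + \delta)}$. Taking $\kappa$ small enough also ensures $H_g(I) \subset (2, 2 + \delta)$.

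The key step, and the main obstacle, is the matching claim: for every $s \in I$, the leaf $\Sigma_s(g)$ coincides with the Neves--Tian leaf $\Sigma_\ell(g)$ of mean curvature $\ell := H_g(s)$. Since $H_m|_I$ is a decreasing diffeomorphism onto an interval containing $H_g(I)$, there is a unique $s' \in I$ with $H_m(s') = \ell$, and by the closeness of $H_g$ to $H_m$ and monotonicity, $|s - s'| \leq \kappa / \inf_I |H_m'|$ can be made arbitrarily small. Rewriting $\Sigma_s(g) = S_s(u(s, g))$ as a graph over the nearby coordinate sphere $S_{s'}$ gives
\begin{equation*}
\Sigma_s(g) = S_{s'}\bigl(u(s, g) + (s - s')\bigr),
\end{equation*}
whose defining function has $C^{2, \alpha}$-norm at most $\eta + |s - s'|$. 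Shrinking $\epsilon$ (and hence $\kappa$) keeps this bound below the $\eta_0$-threshold of Theorem \ref{nocompacto2} applied at $s'$. Since $\Sigma_s(g)$ has constant mean curvature $\ell = H_m(s')$, the uniqueness clause of Theorem \ref{nocompacto2} forces $\Sigma_s(g) = S_{s'}(h(s', g))$, and Remark \ref{crucial} identifies the right-hand side with $\Sigma_\ell(g)$. The difficulty lies in balancing $\epsilon$, $\eta$, and $\kappa$ so that each surface constructed on one side of the overlap falls within the uniqueness regime on the other.

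With the matching in hand, I would concatenate: choose a smooth strictly decreasing diffeomorphism $\ell : [S, +\infty) \to (2, H_g(S)]$ with $\ell(S) = H_g(S)$, and set $\Sigma_t := \Sigma_t(g)$ for $t \in [0, S]$ and $\Sigma_t := \Sigma_{\ell(t)}(g)$ for $t > S$. Matching at $t = S$ guarantees continuity, and since the outer piece uses only Neves--Tian leaves with $\ell < H_g(S)$, which lie strictly outside $\Sigma_S = \Sigma_{H_g(S)}(g)$, no leaf is repeated. The inner piece foliates the compact region bounded by $\partial M$ and $\Sigma_S$, while the outer piece sweeps continuously from $\Sigma_S$ (at $\ell = H_g(S)$) out to conformal infinity (as $\ell \to 2^+$, using $\underline{s}_\ell \to +\infty$), so together they foliate all of $M$.

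For the final verifications: each $\Sigma_t$ is a weakly stable CMC sphere, with $H > 0$ for $t > 0$, by the statements of Theorems \ref{nocompacto} and \ref{noinfinito}, which gives $i)$. For $ii)$, $\partial M$ is minimal by hypothesis, and is outermost by the following maximum principle argument: if $\Sigma$ is a closed minimal surface in $M$ distinct from $\partial M$, the continuous function $t|_\Sigma$ attains a maximum $t^* > 0$ at some point $p^*$ where $\Sigma$ is locally contained in $\{t \leq t^*\}$; comparing mean curvatures at $p^*$ yields $0 = H_\Sigma(p^*) \geq H_{\Sigma_{t^*}} > 0$, a contradiction. Item $iii)$ follows from Proposition \ref{limitemassa}, since the tail of $\{\Sigma_t\}$ is a reparametrization of the tail of the Neves--Tian foliation and the Hawking mass depends only on the leaf.
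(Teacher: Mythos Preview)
Your proof is correct and follows essentially the same approach as the paper's: glue the compact-region foliation of Theorem~\ref{nocompacto} to the Neves--Tian foliation of Theorem~\ref{noinfinito} via the uniqueness in Theorem~\ref{nocompacto2} together with Remark~\ref{crucial}, then verify $i)$--$iii)$ using the Maximum Principle for outermostness and Proposition~\ref{limitemassa} for the Hawking mass limit. The only minor point is the bookkeeping in the matching step: the uniqueness in Theorem~\ref{nocompacto2} holds within the ball of radius $\eta$ (not $\eta_0$), so to make $\|u(s,g)+(s-s')\|_{C^{2,\alpha}}$ fall inside that ball you should either arrange $\|u(s,g)\|_{C^{2,\alpha}}<\eta/2$ by further shrinking $\epsilon$ (as the paper does, using that $u(s,g_m)=0$) or apply Theorem~\ref{nocompacto2} with a strictly larger parameter $\eta'\in(\eta,\eta_0)$.
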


\begin{proof}
  \indent Given an arbitrary $\epsilon_0>0$, let $\delta>0$, $C>0$ and $\underline{s}_{0}>s(3m)$ be given by Theorem \ref{noinfinito}. Recall that $H_m(s)$ is a smooth monotone decreasing function on the interval $(s(3m),+\infty)$ and converges to $2$ as $s$ goes to infinity. Let $s_{0}>\underline{s}_{0}$ be such that $H_m(s_{0})<2+\delta$. \\
  \indent Let $S > s_{0}+1$. Given this choice of $S$, we can choose $\eta<1$ and $\epsilon\in(0,\epsilon_0)$ sufficiently small in such way that Theorem \ref{nocompacto} holds for $S$ and Theorem \ref{nocompacto2} holds for the interval $[S-1,S]$ for every metric $g\in\mathcal{M}(M,m)$ with \mbox{$d(g,g_m)<\epsilon$}. \\
  \indent In particular, we can assume that for every metric $g\in\mathcal{M}(M,m)$ with $d(g,g_m)<\epsilon$ and for every $s\in[S-1,S]$ the surface $\Sigma_{\ell}=\Sigma_{\ell}(g)$ in $(M,g)$ described in Theorem \ref{noinfinito} with $\ell=H_{m}(s)$ is given by 
  \begin{equation*}
  \Sigma_{\ell} = S_{s}(h),
  \end{equation*}
  \noindent where $h$ is the unique function in $C^{2,\alpha}(S^2)$ with norm $<\eta$ such that its graph over $S_{s}$ has constant mean curvature $H_{m}(s)$, see the remarks after Theorem \ref{noinfinito}. \\
  \indent Given $\kappa\in(0,\eta)$, let $[c,d]$ be the image of the interval $[S-3\kappa/4,S-\kappa/4]$  under the map $H_{m}(s)$. We can moreover assume that $\epsilon\in(0,\epsilon_0)$ is sufficiently small  in such way that for every $g\in\mathcal{M}(M,m)$ with $d(g,g_m)<\epsilon$ the foliation $\{\Sigma^{1}_s(g)\}_{s\in[0,S]}$ constructed in Theorem \ref{nocompacto} has in particular the following properties:
  \begin{itemize}
   \item[$a)$] For every $s\in[S-1,S]$, there exists a function $u(s,g)\in C^{2,\alpha}(S^2)$ with 
   $\|u(s,g)\|_{C^{2,\alpha}} < \eta/2$
   such that 
   $$ \Sigma^{1}_s(g) = S_{s}(u(s,g)).$$
   \item[$b)$] The mean curvature $H_{g}(s)$ of $\Sigma^{1}_{s}(g)$ in $(M,g)$ 
   is a decreasing function on the interval $[S-1,S]$ with 
    \begin{equation*}\label{proximidade}
      |H_{g}(s)-H_{m}(s)|< (d-c)/4
    \end{equation*}
    for all $s\in [S-1,S]$. Hence, there exists an interval $(a,b)\subset[S-3\kappa/4,S-\kappa/4]$ such that for every $s\in (a,b)$ there exists a unique $\tilde{s}\in(S-3\kappa/4,S-\kappa/4)$ with $H_{g}(s)=H_{m}(\tilde{s})$. This defines $\tilde{s}$ as a function of $s$, which is of class $C^1$ since $H_{g}(s)$ is a $C^1$ function of $s$.
  \end{itemize}
  
  \indent \\
  \indent We now prove that the theorem is true for this choice of $\epsilon$. \\
  \indent In fact, fix some $g\in\mathcal{M}(M,m)$ with $d(g,g_m)<\epsilon$. By item $a)$ and $b)$ above, given $s\in (a,b)$, if we define the function $\tilde{h} = s - \tilde{s}(s) + u(s,g) \in C^{2,\alpha}(S^2)$, then 
  \begin{equation*}
    \|\tilde{h}\|_{C^{2,\alpha}} \leq |s-\tilde{s}(s)| + \|u\|_{C^{2,\alpha}} < \kappa/2 + \eta/2 < \eta
  \end{equation*}
  \noindent and the graph
  \begin{equation*}
    S_{\tilde{s}(s)}(\tilde{h}) = S_{s}(u(s,g)) = \Sigma^1_{s}(g)
  \end{equation*}
  \noindent has constant mean curvature $H_{g}(s)=H_{m}(\tilde{s}(s))$. These are the conditions that uniquely characterize the function that gives $\Sigma_{\ell}$ in $(M,g)$ with $\ell=H_{m}(\tilde{s}(s))$ as a graph over $S_{\tilde{s}(s)}$. Therefore $\Sigma_{s}^1(g) = \Sigma_{\ell}(g)$ where $\ell=H_m(\tilde{s}(s))$  for all $s\in (a,b)$.  \\
  \indent This proves that the foliations $\{\Sigma^{1}_{s}(g)\}$ and $\{\Sigma_{\ell}(g)\}$ given by Theorems \ref{nocompacto} and \ref{noinfinito}, respectively, match on their overlap. The obtained   foliation, which we reparametrize as $\{\Sigma_{t}\}_{t\in[0,+\infty)}$, is a foliation of $(M,g)$ by weakly stable CMC spheres, starting at the minimal $\Sigma_{0}=\partial M$, such that each $\Sigma_{t}$ has positive mean curvature for $t>0$ and such that
  \begin{equation*}
   \lim_{t\rightarrow +\infty} \sqrt{\frac{|\Sigma_t|}{16\pi}}\left( 1-\frac{1}{16\pi}\int_{\Sigma_t}( H_t^2-4)d\Sigma_t \right) = m,
  \end{equation*}
  \noindent see Theorem \ref{nocompacto}, Theorem \ref{noinfinito} and Proposition \ref{limitemassa}. It remains only to prove that $\partial M$ is an outermost minimal surface in $(M,g)$. This is a consequence of the Maximum Principle, since we proved $M\setminus\partial M$ is foliated by surfaces with positive mean curvature.
\end{proof}

%%%%%%%%%

\section{The Penrose inequality for perturbations of the Schwarzschild anti-de Sitter spaces of positive mass}\label{Secao7}

\indent Using the foliation by weakly stable CMC spheres constructed above on $(M,g)$, $g\in \mathcal{M}(M,m)$ sufficiently close to $g_m$, and the remark of H. Bray that the Hawking mass is monotone along such families (see \cite{B1}), we prove the Penrose inequality for this class of asymptotically hyperbolic manifolds.
\begin{theorem}\label{penrose}
  Given $m>0$, let $\epsilon>0$ be given by Theorem \ref{colagem}. If $g\in\mathcal{M}(M,m)$ with $d(g,g_m)<\epsilon$ has scalar curvature $R\geq -6$, then
  \begin{equation}\label{penroseineq}
    \left(\frac{|\partial M|}{16\pi}\right)^{\frac{1}{2}}+4\left(\frac{|\partial M|}{16\pi}\right)^{\frac{3}{2}} \leq m.
  \end{equation}
  \indent Moreover, equality holds if and only if $(M,g)$ is isometric to $(M,g_m)$.
\end{theorem}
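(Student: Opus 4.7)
The strategy is to combine the global foliation $\{\Sigma_t\}_{t\in[0,+\infty)}$ produced by Theorem \ref{colagem} with Bray's monotonicity formula for the Hawking mass along CMC foliations by weakly stable surfaces in manifolds with $R\geq -6$. Since $\Sigma_0=\partial M$ has $H\equiv 0$, direct substitution in (\ref{mh}) yields
\[
m_H(\partial M) = \sqrt{\frac{|\partial M|}{16\pi}}\left(1+\frac{|\partial M|}{4\pi}\right) = \left(\frac{|\partial M|}{16\pi}\right)^{1/2} + 4\left(\frac{|\partial M|}{16\pi}\right)^{3/2},
\]
so the left-hand side of (\ref{penroseineq}) is exactly $m_H(\Sigma_0)$.

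Next I would establish Bray's monotonicity. Differentiating $m_H(\Sigma_t)$ along the foliation, using the CMC condition on each leaf, the Gauss--Bonnet theorem for the topological term, and the Gauss equation to rewrite $|A_t|^2 + \mathrm{Ric}(N,N)$ in terms of $R+6$, $|\mathring{A}_t|^2$ and $H_t^2$, one arrives at an identity schematically of the form
\[
\frac{d}{dt} m_H(\Sigma_t) = \frac{|\Sigma_t|^{1/2}}{(16\pi)^{3/2}\,\overline{\phi}_t}\int_{\Sigma_t}\Big[\,2|\mathring{A}_t|^2\phi_t^2 + (R+6)\phi_t^2 + Q_t(\phi_t-\overline{\phi}_t)\,\Big]\,d\Sigma_t,
\]
where $\phi_t$ is the lapse function of the foliation, $\overline{\phi}_t$ its mean value, and $Q_t(\psi) = -\int_{\Sigma_t} \psi L_{\Sigma_t}\psi\, d\Sigma_t$ is the quadratic form appearing in the definition of weak stability, which is non-negative on zero-mean functions. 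Combined with the hypothesis $R+6\geq 0$, every term is non-negative, so $\tfrac{d}{dt}m_H(\Sigma_t)\geq 0$. Together with item $iii)$ of Theorem \ref{colagem} this gives $m_H(\partial M) \leq \lim_{t\to+\infty} m_H(\Sigma_t) = m$, which is (\ref{penroseineq}).

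For the rigidity statement, assume equality holds in (\ref{penroseineq}). Then $m_H(\Sigma_t)$ is constant along the foliation, so each non-negative term in the integrand above must vanish identically: this forces $R\equiv -6$ on $M$, every leaf $\Sigma_t$ to be totally umbilic, and (from the equality case of weak stability) the lapse $\phi_t$ to be constant on each leaf. Constant lapse implies the foliation is equidistant, so in Gaussian normal coordinates the metric takes the form $g=ds^2+h_s$; umbilicity with constant mean curvature inserted into the Codazzi identity gives $\mathrm{Ric}(N,X)=0$ for all $X$ tangent to the leaves and forces $h_s=f(s)^2 h_0$, a warped product over the induced metric $h_0$ on $\partial M$. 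Substituting this form into the scalar curvature formula, the constraint $R\equiv -6$ forces the Gauss curvature of $h_0$ to be constant, hence $h_0$ is a round metric on $S^2$. The minimality of $\partial M$ (which gives $f'(0)=0$) together with the asymptotic identification of the mass with $m$ then uniquely determines the warping function as the Schwarzschild anti-de Sitter profile, yielding the isometry $(M,g)\cong (M,g_m)$.

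The main obstacle I anticipate is the rigidity step: promoting the pointwise consequences of vanishing monotonicity (umbilicity, $R\equiv -6$, constant lapse along each leaf) into the global warped-product structure with round fibre, and identifying the resulting ODE solution with the $g_m$ profile via the correct initial and asymptotic data. The inequality itself is a fairly direct consequence of Theorem \ref{colagem} once Bray's monotonicity formula is in hand.
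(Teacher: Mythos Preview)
Your strategy is exactly the paper's: monotonicity of the Hawking mass along the foliation of Theorem~\ref{colagem}, then rigidity from the equality case. Two points need correction.

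First, the monotonicity identity you wrote is not right even schematically. The correct first variation (as in the paper) carries a factor of $H_t$ in front, not $1/\overline{\phi}_t$; this is what makes $m_H'(\Sigma_0)=0$ automatic at the minimal boundary. Moreover the $(R+6)$ and $|\mathring{A}_t|^2$ contributions enter linearly in the lapse, via the term $\overline{\rho}_t\int_{\Sigma_t} Q_t\,d\Sigma_t$ with
\[
Q_t=\tfrac12(R+6)+\Bigl(\tfrac{4\pi}{|\Sigma_t|}-K_t\Bigr)+\tfrac12|\mathring{A}_t|^2,
\]
not quadratically as $\phi_t^2$. The remaining piece is $\int_{\Sigma_t} L_t(\rho_t-\overline{\rho}_t)\,d\Sigma_t$, whose nonnegativity is what actually uses weak stability (your placing the stability quadratic form \emph{inside} the surface integral is incoherent).

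Second, in the rigidity step your claim that equality in the weak stability inequality forces the lapse to be constant is not justified: equality only tells you that $L_t(\rho_t-\overline{\rho}_t)$ is constant on $\Sigma_t$. The paper combines this with $L_t(\rho_t)=-\partial_t H_t=\text{const}$ to deduce that $L_t(\overline{\rho}_t)$, and hence $\mathrm{Ric}(N_t,N_t)+|A_t|^2$, is constant; together with umbilicity and $R\equiv -6$ the Gauss equation then gives that each leaf has \emph{constant Gaussian curvature}. Only after this do you get constant lapse (since now $L_t=\Delta_t+\text{const}$ and $L_t\rho_t$ constant force $\Delta_t\rho_t=0$), and your warped-product argument can proceed. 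So the route to the warped product goes through constant $K_t$, not directly through constant lapse.
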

\begin{proof} Let $\{\Sigma_t\}_{t\geq 0}$ be the foliation of $(M,g)$ by weakly stable CMC spheres constructed in Theorem \ref{colagem}. \\
\indent By assumption, $g$ has scalar curvature $R \geq -6$. Then the Hawking mass of $\Sigma_t$,
\begin{equation*}
  m_{H}(\Sigma_t)=\sqrt{\frac{|\Sigma_t|}{16\pi}}\left( 1-\frac{1}{16\pi}\int_{\Sigma_t}(H_{t}^{2} - 4)d\Sigma_t \right), 
\end{equation*}
\noindent is monotone non-decreasing in $t$. In fact, we can be more precise: \\

\noindent \textbf{Claim:} $m_{H}^{'}(\Sigma_{t}) \geq 0$. Moreover, $m_{H}^{'}(\Sigma_{t})$ is zero
for $t>0$ if and only if $\Sigma_{t}$ satisfies the following properties:
\begin{itemize}
 \item[$a)$] $R$ is constant and equal to $-6$ along $\Sigma_{t}$;
 \item[$b)$] $\Sigma_{t}$ is totally umbilic; and
 \item[$c)$] $\Sigma_{t}$ has constant Gaussian curvature.
\end{itemize}

\medskip

\indent The proof of the claim goes as follows. Choose a parame\-tri\-zation of this foliation
by some function $F:[0,+\infty)\times S^2 \rightarrow M$ such that for each $t\in[0,+\infty)$, 
$F_t: S^2 \rightarrow M$ is a parametrization of $\Sigma_t$ and $\partial_t F$ does not vanish.
Let $\rho_t$ be the lapse function, i.e., $\rho_t= g(N_t,\partial_t F)$ where $N_t$ is the unit
normal of the surface $\Sigma_t$ pointing to infinity. Since we have a foliation, $\rho_t > 0$ on $\Sigma_t$ for all $t$. We also define the mean value $\overline{\rho}_t=\int \rho_t d\Sigma_t/|\Sigma_t|$. \\
\indent By the first variation formula of the Hawking mass (see, for example, \cite{MN}), we have  
\begin{equation*}
  (16\pi)^{\frac{3}{2}}m_{H}^{'}(\Sigma_t) = 2|\Sigma_t|^{\frac{1}{2}}\int_{\Sigma_t}(\Delta_{t}H_t+Q_{t}H_{t})\rho_td\Sigma_{t},
\end{equation*}
\noindent where
$$Q_{t}=\frac{1}{2}(R+6) + \left(\frac{4\pi}{|\Sigma_t|}-K_t\right)+\frac{1}{2}\left(|A_t|^2-\frac{1}{2|\Sigma_t|}\int_{\Sigma_t}H_t^2d\Sigma_t\right).$$
\indent Since $H_t$ is constant for each $t$, 
\begin{align*}
  (16\pi)^{\frac{3}{2}}m_{H}^{'}(\Sigma_t) & = 2|\Sigma_t|^{\frac{1}{2}}H_t\int_{\Sigma_t} Q_{t}\rho_td\Sigma_t \\
                                   & = 2|\Sigma_t|^{\frac{1}{2}}H_t\int_{\Sigma_t} (\Delta_{t}+Q_{t})(\rho_t-\overline{\rho}_t)d\Sigma_t + 2|\Sigma_t|^{\frac{1}{2}}H_{t}\overline{\rho}_{t}\int_{\Sigma_t} Q_{t}d\Sigma_{t} \\
                                   & = 2|\Sigma_t|^{\frac{1}{2}}H_t\int_{\Sigma_t} L_{t}(\rho_t-\overline{\rho}_t)d\Sigma_t + 2|\Sigma_t|^{\frac{1}{2}}H_{t}\overline{\rho}_{t}\int_{\Sigma_t} Q_{t}d\Sigma_{t}.
\end{align*}
\indent In the last line, we used the Gauss equation (\ref{eqGauss}) to see that the operator $\Delta_{t}+Q_{t}$ and the Jacobi operator $L_{t}=\Delta_{t}+Ric(N_t,N_t)+|A_t|^2$ of $\Sigma_t$ differ by a constant. \\
\indent Since $\partial M=\Sigma_0$ is minimal, the derivative of $m_H(\Sigma_t)$ is zero for $t=0$. When $t>0$, $\Sigma_t$ has positive mean curvature. Observe also that $\int_{\Sigma_t} Q_{t}d\Sigma_t \geq 0$, since $R\geq -6$ and $\Sigma_t$ is a sphere. Therefore
\begin{equation*}
  (16\pi)^{\frac{3}{2}}m_{H}^{'}(\Sigma_t) \geq 2|\Sigma_t|^{\frac{1}{2}}H_t\int_{\Sigma_t} L_{t}(\rho_t-\overline{\rho}_t)d\Sigma_t.
\end{equation*}
\indent Now we use the weak stability of $\Sigma_t$. Since $H_t$ is constant for each $t$, $L_{t}(\rho_t)= -\partial_t H_{t}$ is also constant on $\Sigma_t$. Hence the stability inequality gives \begin{equation*}
  0 \leq -\int_{\Sigma_t} L_{t}(\rho_{t}-\overline{\rho}_{t})(\rho_{t}-\overline{\rho}_t)d\Sigma_t =
    \int_{\Sigma_t} L_{t}(\overline{\rho}_{t})(\rho_{t}-\overline{\rho}_t)d\Sigma_t = 
    \overline{\rho_t}\int_{\Sigma_t} L_{t}(\rho_{t}-\overline{\rho}_t)d\Sigma_t.
\end{equation*}
\indent This implies that $m_{H}^{'}(\Sigma_t)$ is non-negative. If $m_{H}^{'}(\Sigma_t) = 0$ for $t>0$, then 
\begin{eqnarray*}
  \int_{\Sigma_t} Q_{t} d\Sigma_t=0 & \text{and} & \int_{\Sigma_t} L_{t}(\rho_t-\overline{\rho}_t)(\rho_t-\overline{\rho}_t)d\Sigma_t=0.
\end{eqnarray*}
\indent The first equality implies that $\Sigma_t$ satisfies a) and b). By the weak stability, the second equality implies that $L_{t}(\rho_t-\overline{\rho}_t)$ is constant, for it must be orthogonal to every function on $\Sigma_t$ with zero mean value. Since $L_t(\rho_t)$ is constant, this implies that $L_{t}(\overline{\rho}_t)$ is also constant. Then $c)$ follows from $a)$, $b)$ and the Gauss equation (\ref{eqGauss}). \\

\indent Once we proved the claim, inequality (\ref{penroseineq}) follows immediately:
\begin{equation*}
  \left(\frac{|\partial M|}{16\pi}\right)^{\frac{1}{2}} + 4\left(\frac{|\partial M|}{16\pi}\right)^{\frac{3}{2}} = m_{H}(\Sigma_0) \leq 
   \lim_{t\rightarrow +\infty}m_{H}(\Sigma_t) =  m.
\end{equation*}
\indent Now we analyze the equality. In this case, $m_{H}(\Sigma_t)$ must be constant and equal to $m$.
By the second part of the claim, each $\Sigma_t$ satisfies $a)$, $b)$ and $c)$
for all $t>0$. Then, possibly after a change of the parametrization $F:[0,+\infty)\times S^2 \rightarrow M$, $F^{*}g$ is a metric on $M=[0,+\infty)\times S^2$ that can be written in the form $ds^2+ V^2(s)g_{0}$, has constant scalar curvature $R=-6$, and is such that all slices $\{s\}\times S^2$ have Hawking mass $m$. These conditions uniquely characterize the metric $g_m$. And this finishes the proof.
\end{proof}
 
\noindent \textbf{Another Penrose inequality.} In Proposition \ref{geomAdSS}, we saw that the mean curvature of the coordinate spheres in the Schwarzschild anti-de Sitter spaces of mass $m>0$ is given by the function $H_{m}(r)=(2/r)\sqrt{1+r^2-2m/r}$. Observe that $H_{m}(2m)=2$ and that $H_{m}(r)>2$ for all $r>2m$. In particular, the Maximum Principle implies that there are no other closed surfaces with constant mean curvature $2$ in $([2m,+\infty)\times S^2,g_m)$. \\
\indent Let $(M,g)$ be an asymptotically hyperbolic three-manifold with connected boundary $\partial M$. Assume that $(M,g)$ has scalar curvature $R\geq -6$ and that $\partial M$ is an outermost $H=2$ surface, meaning that are no closed surfaces in $M$ with constant mean curvature $H=2$ other than $\partial M$. In this setting, the Penrose Conjecture is that the area of $\partial M$ and the total mass $m$ of $(M,g)$
are related by the inequality
\begin{equation*}
  \left(\frac{|\partial M|}{16\pi}\right)^{\frac{1}{2}} \leq m,
\end{equation*}
\noindent and that equality holds if and only if $(M,g)$ is isometric to the piece of the Schwarzschild anti-de Sitter space of mass $m$ outside the region bounded by the coordinate sphere of mean curvature $2$. \\
\indent Given $m>0$ and $M_2=[s(2m),+\infty)\times S^2$, we analogously define the space $\mathcal{M}(M_{2},m)$ of $C^3$ metrics $g$ on $M_2$ such that $\partial M_{2}=\{s(2m)\}\times S^2$ has constant mean curvature $2$ in $(M_2,g)$ and 
\begin{equation*}
 d(g,g_m) = \sup_{p\in M_2}\left(\sum_{i=0}^{3}\exp(4s)\|(\nabla^{m})^i(g-g_m)\|_{g_{m}}(p)\right) < +\infty
\end{equation*}
\indent The analogous versions of Theorem \ref{colagem} and Theorem \ref{penrose} follows immediately by the same arguments:
\begin{theorem} \label{colagem2}
  Let $m>0$. There exists $\epsilon>0$ with the following property: \\
  \indent If $g\in\mathcal{M}(M_{2},m)$ is such that $d(g,g_m)<\epsilon$, then there exists a foliation $\{\Sigma_t\}_{t\in[0,+\infty)}$ of $M_2$ such that:
  \begin{itemize}
   \item[$i)$] Each $\Sigma_{t}$ is a weakly stable CMC sphere in $(M_2,g)$, with mean curvature $H_t>2$ when $t>0$;
   \item[$ii)$] $\Sigma_{0}=\partial M_2$ is an outermost $H=2$ surface in $(M_2,g)$; and
   \item[$iii)$] $\lim_{t\rightarrow +\infty} \sqrt{\frac{|\Sigma_t|}{16\pi}}\left( 1-\frac{1}{16\pi}\int_{\Sigma_t}( H_t^2-4)d\Sigma_t \right)=m.$
  \end{itemize}
\end{theorem}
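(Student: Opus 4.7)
The plan is to follow the proof of Theorem \ref{colagem} step by step, modifying each ingredient to accommodate the boundary condition $H=2$ in place of $H=0$. First I would establish analogs of Theorems \ref{nocompacto} and \ref{nocompacto2} on $M_{2}=[s(2m),+\infty)\times S^2$. The implicit function theorem argument transfers verbatim, since the model Jacobi operator $(1/r^2)(\Delta_0+2-6m/r)$ remains an isomorphism on zero-mean functions for every $r\geq 2m$ when $m>0$ (the potential $2-6m/r$ lies in $(-\infty,2)$, hence avoids every nonzero eigenvalue $k(k+1)$ of $-\Delta_0$). Since every $g\in\mathcal{M}(M_2,m)$ has $\partial M_2$ as an $H=2$ surface by definition, the uniqueness in the implicit function theorem forces $u(s(2m),g)\equiv 0$, so the compact-region foliation $\{\Sigma^1_s(g)\}_{s\in[s(2m),S]}$ begins at $\partial M_2$.

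The delicate point in this compact-region construction is ensuring $H_g(s)>2$ for every $s\in(s(2m),S]$. A direct computation in the model yields $H_m'(s(2m))>0$: from $H_m^2(r)=4(1+r^{-2}-2mr^{-3})$ one obtains $(H_m^2)'(r)|_{r=2m}=1/(2m^3)>0$, and $H_m(s)>2$ holds strictly on any compact subinterval of $(s(2m),+\infty)$. By continuity in $(s,g)$, for $d(g,g_m)<\epsilon$ small the derivative $H_g'(s(2m))$ remains positive, so $H_g(s)>2$ holds both in a right neighborhood of $s(2m)$ (by strict monotonicity from $H_g(s(2m))=2$) and on the complementary compact part of $[s(2m),S]$ (by uniform closeness of $H_g$ to $H_m$).

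The matching with Neves--Tian's foliation near infinity is essentially identical to that of Theorem \ref{colagem}: taking $S$ large enough that $H_m(S)\in(2,2+\delta)$, I would apply Remark \ref{crucial} on an overlap interval $[S-1,S]$ where $H_m$ is monotone decreasing and identify the two families via their mean curvatures. Weak stability and the CMC property are built into both pieces, and Proposition \ref{limitemassa} gives item $iii)$.

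The main obstacle is item $ii)$, the outermost $H=2$ property, which replaces the simpler argument used for the minimal boundary. Suppose $\Sigma\subset M_2$ is a closed $H=2$ surface distinct from $\partial M_2$. Since $\Sigma$ is compact and the leaves $\Sigma_t$ cover $M_2$, set $t_0=\inf\{t\geq 0:\Sigma_t\text{ encloses }\Sigma\}$; if $t_0=0$ then $\Sigma\subseteq\partial M_2$, contradicting $\Sigma\neq\partial M_2$. For $t_0>0$, $\Sigma_{t_0}$ touches $\Sigma$ at some $p$ with $\Sigma$ on the enclosed side and both surfaces sharing the outward unit normal; the standard tangential graph comparison of mean curvatures then forces $H_{\Sigma_{t_0}}(p)\leq H_\Sigma(p)=2$, contradicting $H_{\Sigma_{t_0}}>2$.
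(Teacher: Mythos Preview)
Your proposal is correct and follows essentially the same approach as the paper, which states only that Theorem \ref{colagem2} ``follows immediately by the same arguments'' as Theorem \ref{colagem}. You have accurately identified the two points requiring genuine modification---replacing $H_g(s)>0$ by $H_g(s)>2$ (via $H_m'(s(2m))>0$, just as the paper used $H_m'(0)>0$) and adapting the maximum-principle step for the outermost property---and both are handled correctly.
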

\begin{theorem}\label{penrose2}
  Given $m>0$, let $\epsilon>0$ be given by Theorem \ref{colagem2}.
  If $g\in\mathcal{M}(M_2,m)$ with $d(g,g_m)<\epsilon$ has scalar curvature $R\geq-6$, then
  \begin{equation*}
     \left(\frac{|\partial M_2|}{16\pi}\right)^{\frac{1}{2}} \leq m.
  \end{equation*}
  \indent Moreover, equality holds if and only if $(M_2,g)$ is isometric to $(M_2,g_m)$.
\end{theorem}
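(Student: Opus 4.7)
The plan is to adapt the proof of Theorem \ref{penrose} almost verbatim, noting only one essential change: the boundary $\partial M_2$ has constant mean curvature $H=2$ instead of $H=0$. First I would invoke Theorem \ref{colagem2} to produce a foliation $\{\Sigma_t\}_{t \geq 0}$ of $(M_2,g)$ by weakly stable CMC spheres with $\Sigma_0=\partial M_2$, $H_t>2$ for $t>0$, and $\lim_{t\to+\infty} m_H(\Sigma_t)=m$. The key algebraic observation is that, since $H\equiv 2$ on $\partial M_2$, the Hawking mass of the boundary collapses to
\begin{equation*}
m_H(\Sigma_0) = \sqrt{\frac{|\partial M_2|}{16\pi}}\left(1 - \frac{1}{16\pi}\int_{\partial M_2}(H^2-4)\,d\Sigma\right) = \sqrt{\frac{|\partial M_2|}{16\pi}},
\end{equation*}
so the desired Penrose inequality will follow the moment I establish monotonicity of the Hawking mass along the foliation.

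Next I would reproduce the Bray-type monotonicity computation from the proof of Theorem \ref{penrose}, obtaining
\begin{equation*}
(16\pi)^{3/2} m_H'(\Sigma_t) = 2|\Sigma_t|^{1/2} H_t\left(\int_{\Sigma_t} L_t(\rho_t-\overline{\rho}_t)\,d\Sigma_t + \overline{\rho}_t\int_{\Sigma_t} Q_t\,d\Sigma_t\right),
\end{equation*}
with $\rho_t>0$ the lapse, $\overline{\rho}_t$ its mean, and $Q_t$ exactly as in the proof of Theorem \ref{penrose}. Weak stability of $\Sigma_t$ (applied to the zero-mean function $\rho_t-\overline{\rho}_t$, using that $L_t\rho_t=-\partial_tH_t$ is constant on $\Sigma_t$) forces the first integral to be non-negative; the hypothesis $R\geq -6$ together with Gauss--Bonnet on the sphere $\Sigma_t$ forces the second to be non-negative; and now $H_t\geq 2>0$ for \emph{all} $t\geq 0$, so no separate argument at the boundary is needed (contrast Theorem \ref{penrose}, where the minimality of $\Sigma_0$ was used to nullify the prefactor). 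Thus $m_H(\Sigma_t)$ is non-decreasing on $[0,+\infty)$, and
\begin{equation*}
\sqrt{\frac{|\partial M_2|}{16\pi}} = m_H(\Sigma_0) \leq \lim_{t\to+\infty} m_H(\Sigma_t) = m.
\end{equation*}

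Finally, for rigidity, suppose equality holds; then $m_H(\Sigma_t)\equiv m$ and consequently $m_H'(\Sigma_t)=0$ for every $t\geq 0$. Because $H_t>0$ on the \emph{entire} foliation (including $t=0$), the rigidity part of the claim from the proof of Theorem \ref{penrose} now applies uniformly: every leaf $\Sigma_t$ is totally umbilic, has constant Gaussian curvature, and is contained in the locus $R=-6$. After a reparametrization the pulled-back metric takes the warped form $ds^2+V^2(s)g_0$ with constant scalar curvature $-6$ and with every slice having Hawking mass $m$; as in Theorem \ref{penrose} these conditions pin the warping factor down uniquely, forcing $(M_2,g)$ to be isometric to $(M_2,g_m)$. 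I do not anticipate a serious obstacle: the only delicate point is verifying that the rigidity analysis extends all the way down to $t=0$, which in the minimal case was bypassed, but here is automatic since $H_0=2>0$ makes every step of the monotonicity identity active on $\Sigma_0$ itself.
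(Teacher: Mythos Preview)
Your proposal is correct and is precisely the argument the paper intends: the paper does not give a separate proof of Theorem \ref{penrose2} but simply states that it ``follows immediately by the same arguments'' as Theorem \ref{penrose}, and your write-up supplies exactly those details, including the two observations that distinguish this case (the Hawking mass of the $H=2$ boundary reduces to $\sqrt{|\partial M_2|/16\pi}$, and the prefactor $H_t\geq 2>0$ is positive already at $t=0$ so the monotonicity and rigidity analysis work on the whole interval without exception).
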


%%%%%%%%%

\bibliographystyle{amsbook}

\end{document}